\newtheorem{theorem}{Theorem}
\newtheorem{remark}{Remark}%
  \newtheorem{assumption}[subsection]{Assumption}
  \newtheorem{lemma}[theorem]{Lemma}
  \newtheorem{cor}[theorem]{Corollary}  
\begin{document}

\title{A consistent non-linear  Fokker-Planck model for a gas mixture of polyatomic molecules}


\author{Marlies Pirner}
\date{}

\maketitle

{\bf abstract:} We consider a multi component gas mixture with translational and internal energy degrees of freedom without chemical reactions assuming that the number of particles of each species remains constant. We will illustrate the derived model  in the case of two species, but the model can be  generalized to multiple species. The two species are allowed to have different degrees of freedom in internal energy and are modeled by a system of kinetic Fokker-Planck equations featuring two interaction terms to account for momentum and energy transfer between the species. 
We prove consistency of our model: conservation properties, positivity of the temperatures,  H-theorem and we characterize the equilibrium as two Maxwell distributions where all temperatures coincide. \\ \\
{\bf Keywords:} multi-fluid mixture, kinetic model, Fokker-Planck approximation, polyatomic molecules
\section{Introduction}
 
 In this paper we shall concern ourselves with a kinetic description of gas mixtures for polyatomic molecules. In the case of mono atomic molecules and two species this is traditionally done via the Boltzmann equation or the Landau-Fokker-Planck equation for the density distributions $f_1$ and $f_2$, see for example \cite{Cercignani, Cercignani_1975, Chapman}. Under certain assumptions the complicated interaction terms of the Boltzmann or Landau-Fokker-Planck equation can be simplified by a Fokker-Planck approximation.
 This approximation should be constructed in a way such that it  has the same main properties of the Boltzmann or Landau-Fokker-Planck equation namely conservation of mass, momentum and energy, further it should have an H-theorem with its entropy inequality and the equilibrium must still be Maxwellian.  Fokker-Planck  models give rise to efficient numerical computations \cite{n3,n7,n8}. Evolution of a polyatomic gas mixture is very important in applications. But, most kinetic models deal with the case of a mono atomic  gas consisting of only one species.  \\
 
{\bf Literature on multi-species monoatomic Fokker Planck models:} The interest in multi-species Fokker-Planck models has been increased more and more recently. Models for gas mixtures of monoatomic molecules can be found in \cite{7, Hu, n7, n8,Agrawal, Pirner_Fokker, Cory}. The diffusion limit of a kinetic Fokker-Planck system for charged particles towards the Nernst-Planck equations was proved in \cite{15}. Furthermore, in \cite{7,11}, the limit of vanishing electron-ion mass ratios for non-homogeneous kinetic Fokker-Planck systems was investigated. In \cite{Hu}, the authors provide the first existence analysis of a multi-species Fokker-Planck system of the shape above. The works \cite{n7, n8} provide an extended Fokker-Planck model for hard-spheres gas mixtures to be able to also capture correct diffusion coefficients, mixture viscosity and heat conductivity coefficients in the hydrodynamic regime of the Navier-Stokes equations. \\

{\bf Literature on one species polyatomic Fokker-Planck models:}  In contrast to mono atomic molecules, in a polyatomic gas energy is not entirely stored in the kinetic energy of its molecules but also in their rotational and vibrational modes. One species Fokker-Planck models for polyatomic molecules can be found for example in \cite{Mathiaud1,Mathiaud2,Nagel,Morse}. Here, a additional dependency on the degrees of freedom in internal energy in the distribution function is introduced. There are to ansatzes in the literature to do so, a discrete dependency on the degrees of freedom in internal energy as it is done in \cite{Mathiaud1} or with a continuous dependency as in \cite{Nagel,Morse}. In \cite{Mathiaud2}, an extension is proposed to obtain the correct Prandtl number in the macroscopic regime of Navier-Stokes equations in the spirit of an ES-BGK extension of a Bathnagar-Gross-Krook (BGK) extension of the Boltzmann equation \cite{ES}.\\

 In this paper, we want to propose a Fokker-Planck model for a mixture of polyatomic molecules. As far as we can see this is not available in the literature at all. The construction of this model is motivated by the construction of the BGK model for a mixture of polyatomic molecules presented in \cite{Pirner_poly}. The advantage is the following. Kinetic models for polyatomic gases have two temperatures for the two different types of degrees of freedom, the translational and the internal energy degrees of freedom. Therefore, one expects two types of relaxations, a relaxation of the distribution function to a Maxwell distribution and a relaxation of the two temperatures to an equal value. The speed for the first type of relaxation may be faster or slower than the second type of relaxation. The construction in \cite{Pirner_poly} allows to separate these two types of relaxations. In addition, we use the idea of \cite{Pirner, Cory, Pirner_Fokker} and introduce free parameters in the model which can be used to fix exchange terms of momentum and energy, for more details see \cite{Pirner, Cory, Pirner_Fokker}.
 For simplification we present the model in the case of two species but the generalization to $N$ species is briefly described in remark \ref{rem4}. We do not consider chemical reactions. 
 We allow the two species to have different degrees of freedom in internal energy. For example, we may consider a mixture consisting of a mono atomic and a diatomic gas. \\

The outline of the paper is as follows: in section \ref{sec1} we will present a Fokker-Planck model for polyatomic molecules from \cite{Bernard} for two species of polyatomic molecules. 
In sections \ref{sec3} to \ref{sec6} we prove the conservation properties and the H-theorem. We show the positivity of all temperatures and quantify the structure of the equilibrium. 
In section \ref{sec9} we give an application in the case of a mixture consisting of a mono atomic and a polyatomic species.



\section{The space-homogeneous Fokker-Planck approximation for a mixture of polyatomic molecules}
\label{sec1}
\textcolor{black}{In this section we first want to motivate how our model with several coupled equations will look like. For the convenience of the reader, we will summarize all these equations again at the end of the section such that one sees the whole model at a glance.}
For simplicity in the following we consider a mixture composed of two different species. Let $x\in \mathbb{R}^d$ and $v\in \mathbb{R}^d, d \in \mathbb{N}$ be the phase space variables  and $t\geq 0$ the time. Let $M$ be the total number of different rotational and vibrational degrees of freedom and $l_k$ the number of internal degrees of freedom of species $k$, $k=1,2$. Note that the sum $l_1+l_2$ is not necessarily equal to $M$, because the two species could both have the same internal degree of freedom. Then $\eta \in \mathbb{R}^{M}$  is the variable for the internal energy degrees of freedom, $\eta_{l_k} \in \mathbb{R}^{M}$ coincides with $\eta$ in the components corresponding to the internal degrees of freedom of species $k$ and is zero in the other components. For example, we can consider two species both composed of molecules consisting of two atoms, such that the molecules have rotational degrees of freedom in addition to the three translational degrees of freedom. In general, a molecule consisting of two atoms has three possible axes around which it can rotate. But since the energy needed to rotate the molecule around the axes parallel to the line connecting the two atoms is very high (see for example \cite{Kelly}), this does not occur, so we have two rotational degrees of freedom. In this example we have $M=l_1=l_2=2$. \\ Since we want to describe two different species, our kinetic model has two distribution functions $f_1(x,v,\eta_{l_1},t)> 0$ and $f_2(x,v,\eta_{l_2},t) > 0$. 
 Furthermore we relate the distribution functions to  macroscopic quantities by mean-values of $f_k$, $k=1,2$ as follows
\begin{align}
\int f_k(v, \eta_{l_k}) \begin{pmatrix}
1 \\ v \\ \eta_{l_k} \\ m_k |v-u_k|^2 \\ m_k |\eta_{l_k} - \bar{\eta}_k |^2 
\end{pmatrix} 
dv d\eta_{l_k}=: \begin{pmatrix}
n_k \\ n_k u_k \\ n_k \bar{\eta}_k \\ d n_k T_k^{t} \\ l_k n_k T_k^{r} 
\end{pmatrix} , \quad k=1,2,
\label{moments}
\end{align} 
where $n_k$ is the number density, $u_k$ the mean velocity,  $T_k^{t}$ the mean temperature of the translation, and $T_k^{r}$ the mean temperature of the internal energy degrees of freedom for example rotation or vibration 
Note that in this paper we shall write $T_k^{t}$ and $T_k^{r}$ instead of $k_B T_k^{t}$ and $k_B T_k^{r}$, where $k_B$ is Boltzmann's constant. In  the following, we will require $\bar{\eta}_k=0$, which means that the energy in rotations clockwise is the same as in rotations counter clockwise. Similar for vibrations.

The distribution functions are determined by two equations to describe their time evolution. Furthermore we only consider binary interactions. 
So the particles of one species can interact with either themselves or with particles of the other species. In the model this is accounted for by introducing two interaction terms in both equations. These considerations allow us to write formally the system of equations for the evolution of the mixture. The following structure containing a sum of the collision operators is also given in \cite{Cercignani, Cercignani_1975,Chapman}. \\
We are interested in a Fokker-Planck approximation of the interaction terms. 
Then the model can be written as:

\begin{align} \begin{split} \label{BGK}
\partial_t f_1    &= c_{11} n_1 \left( \frac{\Lambda_{11}}{m_1} \nabla_{v} \cdot \left( M_1 \nabla_{v} \left( \frac{f_1}{M_1} \right) \right) + \frac{\Theta_{11}}{m_1} \nabla_{\eta_{l_1}} \cdot \left( M_1 \nabla_{\eta_{l_1}} \left( \frac{f_1}{M_{1}} \right) \right)  \right)
\\&+ c_{12} n_2 \left( \frac{\Lambda_{12}}{m_1} \nabla_{v} \cdot \left( M_{12} \nabla_{v} \left( \frac{f_1}{M_{12}} \right) \right)  + \frac{\Theta_{12}}{m_1} \nabla_{\eta_{l_1}} \cdot \left( M_{12} \nabla_{\eta_{l_1}} \left( \frac{f_1}{M_{12}} \right) \right)\right)\\ 
\partial_t f_2  &=c_{22} n_2 \left( \frac{\Lambda_{22}}{m_2} \nabla_{v} \cdot \left( M_2 \nabla_{v} \left( \frac{f_2}{M_2} \right) \right) + \frac{\Theta_{22}}{m_2} \nabla_{\eta_{l_2}} \cdot \left( M_2 \nabla_{\eta_{l_2}} \left( \frac{f_2}{M_{2}} \right) \right)  \right)
\\&+ c_{21} n_1 \left( \frac{\Lambda_{21}}{m_2} \nabla_{v} \cdot \left( M_{21} \nabla_{v} \left( \frac{f_2}{M_{21}} \right) \right)  + \frac{\Theta_{21}}{m_2} \nabla_{\eta_{l_2}} \cdot \left( M_{21} \nabla_{\eta_{l_2}} \left( \frac{f_2}{M_{21}} \right) \right)\right)
\end{split}
\end{align}
with the Maxwell distributions
\begin{align} 
\begin{split}
M_k(x,v,\eta_{l_k},t) &= \frac{n_k}{\sqrt{2 \pi \frac{\Lambda_k}{m_k}}^d } \frac{1}{\sqrt{2 \pi \frac{\Theta_k}{m_k}}^{l_k}} \exp \left({- \frac{|v-u_k|^2}{2 \frac{\Lambda_k}{m_k}}}- \frac{|\eta_{l_k}|^2}{2 \frac{\Theta_k}{m_k}}\right), 
\\
M_{kj}(x,v,\eta_{l_k},t) &= \frac{n_{kj}}{\sqrt{2 \pi \frac{\Lambda_{kj}}{m_k}}^d } \frac{1}{\sqrt{2 \pi \frac{\Theta_{kj}}{m_k}}^{l_k}} \exp \left({- \frac{|v-u_{kj}|^2}{2 \frac{\Lambda_{kj}}{m_k}}}- \frac{|\eta_{l_k}|^2}{2 \frac{\Theta_{kj}}{m_k}}\right), 
\end{split}
\label{BGKmix}
\end{align}
for $ j,k =1,2, j \neq k$, 
where $c_{11}$ and $c_{22}$ are friction constants related to intra-species collisions, while $c_{12}$ and $c_{21}$ are related to interspecies collisions. 
To be flexible in choosing the relationship between the friction constants, we now assume the relationship
\begin{equation} 
c_{12}=\varepsilon c_{21}. \quad 0 < \varepsilon \leq 1.
\label{coll}
\end{equation}
The restriction $ \varepsilon \leq 1$ is without loss of generality.
If $\varepsilon >1$, exchange the notation $1$ and $2$ and choose $\frac{1}{\varepsilon}.$ In addition, 
we assume that all friction constants are positive.

Since rotational/vibrational and translational degrees of freedom relax at a different rate, $T_k^{t}$ and $T_k^{r}$ will first relax to partial temperatures $\Lambda_k$ and $\Theta_k$ respectively. Conservation of internal energy then requires that at each time
\begin{align}
\frac{d}{2} n_k \Lambda_k = \frac{d}{2} n_k T_k^{t} +\frac{l_k}{2} n_k T_k^{r} - \frac{l_k}{2} n_k \Theta_k, \quad k=1,2. \label{internal}
\end{align}  
Thus, $\Lambda_k$ can be written as a function of $\Theta_k.$
In equilibrium we expect the two temperatures $\Lambda_k$ and $\Theta_k$ to coincide, so we close the system by adding the equations
 \begin{align}
 \begin{split}
 \partial_t M_k = 
 \frac{c_{kk} n_k}{Z_k^r} \frac{d+l_k}{d} \left( \frac{T_k}{m_k} \nabla_{v} \cdot \left( \widetilde{M}_k \nabla_v \left( \frac{M_k}{\widetilde{M}_k} \right) \right) +\frac{T_k}{m_k} \nabla_{\eta_{l_k}} \cdot \left( \widetilde{M}_k \nabla_{\eta_{l_k}} \left( \frac{M_k}{\widetilde{M}_k} \right) \right)\right) \\
+ c_{kj} n_j \left( \frac{T_{kj}}{m_k} \nabla_{v} \cdot \left( \widetilde{M}_{kj} \nabla_v \left( \frac{M_k}{\widetilde{M}_{kj}} \right) \right) 
+ \frac{T_{kj}}{m_k} \nabla_{\eta_{l_k}} \left( \widetilde{M}_{kj} \nabla_{\eta_{l_k}} \left( \frac{M_k}{\widetilde{M}_{kj}} \right) \right)\right) ,
 \end{split}
 \label{kin_Temp}
 \end{align}
for $j,k=1,2, j \neq k$, where $Z_k^r$ is a given parameter corresponding to the different rates of decays of translational and rotational/vibrational degrees of freedom. 
 Here $M_k$ is given by
\begin{align} 
M_k(x,v,\eta_{l_k},t) = \frac{n_k}{\sqrt{2 \pi \frac{\Lambda_k}{m_k}}^d } \frac{1}{\sqrt{2 \pi \frac{\Theta_k}{m_k}}^{l_k}} \exp({- \frac{|v-u_k|^2}{2 \frac{\Lambda_k}{m_k}}}- \frac{|\eta_{l_k}|^2}{2 \frac{\Theta_k}{m_k}}), \quad k=1,2,
\label{Maxwellian}
\end{align}
and $\widetilde{M}_k, \widetilde{M}_{kj}$ are given by 
\begin{align}
\widetilde{M}_k= \frac{n_k}{\sqrt{2 \pi \frac{T_k}{m_k}}^{d+l_k}} \exp \left(- \frac{m_k |v-u_k|^2}{2 T_k}- \frac{m_k|\eta_{l_k}|^2}{2 T_k} \right), \quad k=1,2, \\
\widetilde{M}_{kj}= \frac{n_k}{\sqrt{2 \pi \frac{T_{kj}}{m_k}}^{d+l_k}} \exp \left(- \frac{m_k |v-u_{kj}|^2}{2 T_{kj}}- \frac{m_k|\eta_{l_k}|^2}{2 T_{kj}} \right), \quad k=1,2.\label{Max_equ}
\end{align}
where $T_k$ is the total equilibrium temperature and is given by 
\begin{align}
T_k:= \frac{d \Lambda_k + l_k \Theta_k}{d+l_k}= \frac{d T^{t}_k + l_k T^{r}_k}{d+l_k}.
\label{equ_temp}
\end{align}
The second equality here follows from \eqref{internal}.
In addition, we define $T_{kj}$ as
\begin{align}
T_{kj}:= \frac{d \Lambda_{kj} + l_k \Theta_{kj}}{d+l_k}.
\label{equ_temp2}
\end{align}
\begin{remark}
Note, that we can write the derivatives in the collision operators in the equivalent form (see for example \cite{Cory})
\begin{align*}
 \frac{\Lambda_k}{m_k} \nabla_v \cdot \left(M_k \nabla_v \left( \frac{f_k}{M_k} \right) \right) &=  \left(\nabla_v \cdot \left( \left( v-u_k\right) f_k\right) + \nabla_v \cdot \left(\frac{\Lambda_k}{m_k} \nabla_v f_k\right)\right)
\\
 \frac{\Lambda_k}{m_k} \nabla_{\eta_{l_k}} \cdot \left(M_k \nabla_{\eta_{l_k}} \left( \frac{f_k}{M_k} \right) \right) &= \left(\nabla_{\eta_{l_k}} \cdot \left(  {\eta_{l_k}}  f_k\right) + \nabla_{\eta_{l_k}} \cdot \left(\frac{\Theta_k}{m_k} \nabla_{\eta_{l_k}} f_k\right)\right)
\\
 \frac{\Lambda_{kj}}{m_k} \nabla_v \cdot \left(M_{kj} \nabla_v \left( \frac{f_k}{M_{kj}} \right) \right) &= \left(\nabla_v \cdot \left( \left( v-u_{kj}\right) f_k\right) + \nabla_v \cdot \left(\frac{\Lambda_{kj}}{m_k} \nabla_v f_k\right)\right)
\\
\frac{\Lambda_{kj}}{m_k} \nabla_{\eta_{l_k}} \cdot \left(M_{kj} \nabla_{\eta_{l_k}} \left( \frac{f_k}{M_{kj}} \right) \right) &=  \left(\nabla_{\eta_{l_k}} \cdot \left( {\eta_{l_k}}  f_k\right) + \nabla_{\eta_{l_k}} \cdot \left(\frac{\Theta_{kj}}{m_k} \nabla_{\eta_{l_k}}f_k\right)\right)
\\
 \frac{T_k}{m_k} \nabla_v \cdot \left(\widetilde{M}_k \nabla_v \left( \frac{M_k}{\widetilde{M}_k} \right) \right) &=  \left(\nabla_v \cdot \left( \left( v-u_k\right) M_k\right) + \nabla_v \cdot \left(\frac{T_k}{m_k} \nabla_v M_k\right)\right)
\\
 \frac{T_k}{m_k} \nabla_{\eta_{l_k}} \cdot \left(\widetilde{M}_k \nabla_{\eta_{l_k}} \left( \frac{M_k}{\widetilde{M}_k} \right) \right) &= \left(\nabla_{\eta_{l_k}} \cdot \left( \eta_{l_k} M_k\right) + \nabla_{\eta_{l_k}} \cdot \left(\frac{T_k}{m_k} \nabla_{\eta_{l_k}} M_k\right)\right)
\\
\frac{T_{kj}}{m_k} \nabla_v \cdot \left(\widetilde{M}_{kj} \nabla_v \left( \frac{M_k}{\widetilde{M}_{kj}} \right) \right) &=  \left(\nabla_v \cdot \left( \left( v-u_{kj}\right) M_k\right) + \nabla_v \cdot \left(\frac{T_{kj}}{m_k} \nabla_v M_k\right)\right)
\\
\frac{T_{kj}}{m_k} \nabla_{\eta_{l_k}} \cdot \left(\widetilde{M}_{kj} \nabla_{\eta_{l_k}} \left( \frac{M_k}{\widetilde{M}_{kj}} \right) \right) &=  \left(\nabla_{\eta_{l_k}} \cdot \left( {\eta_{l_k}} M_k\right) + \nabla_{\eta_{l_k}} \cdot \left(\frac{T_{kj}}{m_k} \nabla_{\eta_{l_k}} M_k\right)\right)
\end{align*}
\end{remark}

If we multiply \eqref{kin_Temp} by $|\eta_{l_k}|^2$, integrate with respect to $v$ and $\eta_{l_k}$ and use \eqref{equ_temp}, we obtain  
\begin{align}
\begin{split}
\partial_t(n_k \Theta_k) +   \nabla_x\cdot (n_k \Theta_k u_k) = \frac{c_{kk} n_k}{Z_k^r} n_k (\Lambda_k - \Theta_k)&+ c_{kj} n_j n_k(T_{kj} - \Theta_k) ,\quad k=1,2.
\end{split}
\label{relax}
\end{align}

We see that in this model the term $\frac{c_{kk} n_k}{Z_k^r} n_k (\Lambda_k - \Theta_k)$ describes the relaxation of the two temperatures to a common temperature.

In addition,  \eqref{BGK} and \eqref{kin_Temp} are consistent. If we multiply the equations for species $k$ of \eqref{BGK} and \eqref{kin_Temp}  by $v$ and integrate with respect to $v$ and $\eta_{l_k}$, we get in both cases for the right-hand side 
$$ c_{kj} n_j n_k  (u_{jk} - u_k),$$ and if we compute the total internal energy of both equations, we obtain in both cases $$ \frac{1}{2} c_{kj} n_k n_j [d \Lambda_{jk} + l_j \Theta_{jk} - ( d \Lambda_j + l_j \Theta_j)].$$
The motivation of choosing $T_{kj}$ was to guarantee the last one, namely to ensure that \eqref{BGK} and \eqref{kin_Temp} are consistent in the momentum exchange and internal energy exchange.
 \\
 
With this choice of the Maxwell distributions $M_1$ and $M_2$ have the same densities, mean velocities and internal energies as $f_1$ respective $f_2$. This guarantees the conservation of mass, momentum and energy in interactions of one species with itself. 
The remaining parameters $n_{12}, n_{21}, u_{12}, u_{21}, \Lambda_{12}$ , $\Lambda_{21}$, $\Theta_{12}$ and $\Theta_{21}$ will be determined further down using conservation of the number of particles, total momentum and total energy, together with some symmetry considerations.
\textcolor{black}{We will determine $n_{12}$ and $n_{21}$ in equation \eqref{density} using conservation of the number of particles. The velocities $u_{12}$ and $u_{21}$ will be determined in equations \eqref{convexvel} and \eqref{veloc} by using conservation of total momentum. Last, the parameters  $\Lambda_{12}$ , $\Lambda_{21}$, $\Theta_{12}$ and $\Theta_{21}$ will be determined in theorem \ref{consenergy} and remark \ref{detpar}.  }
\\ \\
\textcolor{black}{Now, for the convenience of the reader, we want to write down our model again that one sees on the first view which equations we want to couple. Our Fokker-Planck model for two species coupled with one relaxation equation and one algebraic equation for the temperatures for each species can be written as
\begin{align*} \begin{split} 
\partial_t f_1   &= c_{11} n_1 \left( \frac{\Lambda_{11}}{m_1} \nabla_{v} \cdot \left( M_1 \nabla_{v} \left( \frac{f_1}{M_1} \right) \right) + \frac{\Theta_{11}}{m_1} \nabla_{\eta_{l_1}} \cdot \left( M_1 \nabla_{\eta_{l_1}} \left( \frac{f_1}{M_{1}} \right) \right)  \right)
\\&+ c_{12} n_2 \left( \frac{\Lambda_{12}}{m_1} \nabla_{v} \cdot \left( M_{12} \nabla_{v} \left( \frac{f_1}{M_{12}} \right) \right)  + \frac{\Theta_{12}}{m_1} \nabla_{\eta_{l_1}} \cdot \left( M_{12} \nabla_{\eta_{l_1}} \left( \frac{f_1}{M_{12}} \right) \right)\right)\\ 
\partial_t f_2  &=c_{22} n_2 \left( \frac{\Lambda_{22}}{m_2} \nabla_{v} \cdot \left( M_2 \nabla_{v} \left( \frac{f_2}{M_2} \right) \right) + \frac{\Theta_{22}}{m_2} \nabla_{\eta_{l_2}} \cdot \left( M_2 \nabla_{\eta_{l_2}} \left( \frac{f_2}{M_{2}} \right) \right)  \right)
\\&+ c_{21} n_1 \left( \frac{\Lambda_{21}}{m_2} \nabla_{v} \cdot \left( M_{21} \nabla_{v} \left( \frac{f_2}{M_{21}} \right) \right)  + \frac{\Theta_{21}}{m_2} \nabla_{\eta_{l_2}} \cdot \left( M_{21} \nabla_{\eta_{l_2}} \left( \frac{f_2}{M_{21}} \right) \right)\right)
\end{split}
\end{align*}
\begin{align*}
\frac{d}{2} n_k \Lambda_k = \frac{d}{2} n_k T_k^{t} +\frac{l_k}{2} n_k T_k^{r} - \frac{l_k}{2} n_k \Theta_k,  
\end{align*}
\begin{align*}
\begin{split}
\partial_t(n_k \Theta_k)  = \frac{c_{kk} n_k}{Z_k^r} n_k (\Lambda_k - \Theta_k)+ c_{kj} n_j n_k(T_{kj} - \Theta_k^r) ,\quad k=1,2.
\end{split}
\end{align*}}
\section{Conservation properties}
\label{sec3}
Conservation of the number of particles and total momentum of the model for mixtures described in \textcolor{black}{section \ref{sec1}} are shown in the same way as in the case of mono atomic molecules. 
Conservation of the number of particles and of total momentum are guaranteed by the following choice of the mixture parameters:\\ \\
If we assume that \begin{align} n_{12}=n_1 \quad \text{and} \quad n_{21}=n_2,  
\label{density} 
\end{align}
we have conservation of the number of particles, see Theorem 1 in \cite{Pirner_Fokker}.
If we further assume that $u_{12}$ is a linear combination of $u_1$ and $u_2$
 \begin{align}
u_{12}= \delta u_1 + (1- \delta) u_2, \quad \delta \in \mathbb{R},
\label{convexvel}
\end{align} then we have conservation of total momentum
provided that
\begin{align}
u_{21}=u_2 - \frac{m_1}{m_2} \varepsilon (1- \delta ) (u_2 - u_1),
\label{veloc}
\end{align}
see Theorem 2 in \cite{Pirner_Fokker}.

In the case of total energy we have a difference for the polyatomic case compared to the monoatomic one. So we explicitly consider this in the following theorem.
\begin{theorem}[Conservation of total energy]
Assume \eqref{coll}, conditions \eqref{density}, \eqref{convexvel} and \eqref{veloc} and assume that $\Lambda_{12}$ and $\Theta_{12}$ satisfy
\begin{align}
\begin{split}
d \Lambda_{12} +l_1 \Theta_{12} = \alpha (d \Lambda_1 +l_1 \Theta_1) + (1-\alpha) (d \Lambda_2 +l_2 \Theta_2) + \gamma |u_1-u_2|^2 \label{contemp}
\end{split}
\end{align}
Then we have conservation of total energy
\begin{align*}
\int \frac{m_1}{2} (|v|^2 + |\eta_{l_1}|^2 ) Q_{12}(f_1,f_2) dv d\eta_{l_1} +
\int \frac{m_2}{2} (|v|^2 + |\eta_{l_2}|^2 ) Q_{21}(f_2,f_1) dv d\eta_{l_2}= 0,
\end{align*}
\label{consenergy}
\end{theorem}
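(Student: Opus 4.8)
The plan is to reduce the claim to integration by parts against the kinetic-energy weight $\tfrac{m_k}{2}(|v|^2+|\eta_{l_k}|^2)$, using the divergence structure of the interspecies operators $Q_{12},Q_{21}$ (the cross-collision terms in \eqref{BGK}) together with the moment relations \eqref{moments}. First I would rewrite $Q_{12}(f_1,f_2)$ in the equivalent drift--diffusion form noted in the Remark, namely
\begin{align*}
\frac{Q_{12}(f_1,f_2)}{c_{12}n_2} = \nabla_v \cdot \big((v-u_{12})f_1\big) + \frac{\Lambda_{12}}{m_1}\Delta_v f_1 + \nabla_{\eta_{l_1}} \cdot (\eta_{l_1}f_1) + \frac{\Theta_{12}}{m_1}\Delta_{\eta_{l_1}}f_1 ,
\end{align*}
and likewise for $Q_{21}$. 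Multiplying by $\tfrac{m_1}{2}(|v|^2+|\eta_{l_1}|^2)$ and integrating in $v,\eta_{l_1}$, the boundary terms vanish by the Gaussian decay of $f_1$ and its first derivatives; integrating by parts once in the drift terms and twice in the diffusion terms, and using $\Delta_v|v|^2=2d$, $\Delta_{\eta_{l_1}}|\eta_{l_1}|^2=2l_1$, I obtain a combination of moments of $f_1$. By \eqref{moments} with $\bar\eta_1=0$ this evaluates to
\begin{align*}
\int \frac{m_1}{2}(|v|^2+|\eta_{l_1}|^2)\,Q_{12}(f_1,f_2)\,dv\,d\eta_{l_1} = c_{12}n_1n_2\big[(d\Lambda_{12}+l_1\Theta_{12}) - (dT_1^{t}+l_1T_1^{r})\big] + c_{12}m_1n_1n_2\,u_1\cdot(u_{12}-u_1),
\end{align*}
and then I would use \eqref{internal} to replace $dT_1^{t}+l_1T_1^{r}$ by $d\Lambda_1+l_1\Theta_1$. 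An index swap $1\leftrightarrow 2$ gives the analogous expression for the $Q_{21}$ contribution.

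Next I would add the two contributions and feed in the structural hypotheses. The friction relation \eqref{coll} produces a common prefactor $c_{21}n_1n_2$. The velocity choices \eqref{convexvel} and \eqref{veloc} give $u_{12}-u_1=(1-\delta)(u_2-u_1)$ and $u_{21}-u_2=-\tfrac{m_1}{m_2}\varepsilon(1-\delta)(u_2-u_1)$, so the momentum-type cross terms combine into
\begin{align*}
c_{21}n_1n_2\big(\varepsilon m_1\,u_1\cdot(u_{12}-u_1) + m_2\,u_2\cdot(u_{21}-u_2)\big) = -\,\varepsilon c_{21}m_1 n_1 n_2 (1-\delta)\,|u_1-u_2|^2 .
\end{align*}
What remains of the sum is the purely thermal part $c_{21}n_1n_2\big(\varepsilon[(d\Lambda_{12}+l_1\Theta_{12})-(d\Lambda_1+l_1\Theta_1)] + [(d\Lambda_{21}+l_2\Theta_{21})-(d\Lambda_2+l_2\Theta_2)]\big)$. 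Substituting the assumption \eqref{contemp} for $d\Lambda_{12}+l_1\Theta_{12}$, the total of all terms becomes an explicit affine expression in $d\Lambda_{21}+l_2\Theta_{21}$, the species temperatures, and $|u_1-u_2|^2$; it vanishes exactly for the unique value of $d\Lambda_{21}+l_2\Theta_{21}$ recorded afterwards in remark \ref{detpar} (the splitting into $\Lambda_{21}$ and $\Theta_{21}$ being fixed by the analogue of \eqref{internal}), which is the assertion of the theorem.

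The proof is essentially a bookkeeping exercise and I do not expect a genuine obstacle; the two points demanding care are (i) tracking the internal-variable convention so that $\nabla_{\eta_{l_k}}\cdot\eta_{l_k}$ and $\Delta_{\eta_{l_k}}|\eta_{l_k}|^2$ return the dimension $l_k$ rather than $M$, consistently with the Gaussian normalisations in $M_{kj}$, and (ii) correctly collapsing the momentum cross terms, where the specific factor $\tfrac{m_1}{m_2}\varepsilon(1-\delta)$ in \eqref{veloc} is precisely what makes the $u_1\cdot(u_2-u_1)$ and $u_2\cdot(u_2-u_1)$ pieces assemble, together with \eqref{coll}, into a single multiple of $|u_1-u_2|^2$. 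Only after this reduction is the affine constraint \eqref{contemp} invoked, and that is the sole place where the free parameters $\alpha,\gamma$ enter.
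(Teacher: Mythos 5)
Your proposal is correct and follows essentially the same route as the paper: compute the energy-exchange moments of $Q_{12}$ and $Q_{21}$, use \eqref{internal} to trade $dT_k^{t}+l_kT_k^{r}$ for $d\Lambda_k+l_k\Theta_k$, substitute \eqref{coll}, \eqref{convexvel}, \eqref{veloc} and \eqref{contemp}, and read off that the sum vanishes precisely when $d\Lambda_{21}+l_2\Theta_{21}$ takes the value in \eqref{temp}. The only cosmetic differences are that you spell out the integration by parts the paper leaves implicit, and that the balancing value you need is the one recorded in \eqref{temp} rather than in remark \ref{detpar}.
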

provided that
\begin{align}
\begin{split}
d \Lambda_{21} + l_2 \Theta_{21}=\left[  \varepsilon m_1 (1- \delta)  - \varepsilon \gamma \right] |u_1 - u_2|^2 + (1- \varepsilon (1-\alpha)) (d \Lambda_2 + l_2 \Theta_2)  \\+ \varepsilon (1- \alpha) (d \Lambda_1 + l_1 \Theta_1) 
\label{temp}
\end{split}
\end{align}
\begin{proof}
Using the definition of the energy exchange of species $1$ and equation \eqref{internal}, we obtain
\begin{align*}
F_{E_{1,2}}:&= \int \frac{m_1}{2} (|v|^2 + |\eta_{l_1}|^2)  Q_{12}(f_1,f_2) dv d\eta_{l_1} 
\\&= c_{12} m_1 n_1 n_2 u_1 \cdot (u_{12}-u_1)+ c_{12}  n_1 n_2 d (\Lambda_{12} - T_1^{t})  +l_1 c_{12} n_1 n_2 ( \Theta_{12} - T_1^{r} ) \\&= c_{12} m_1 n_1 n_2 u_1 \cdot (u_{12}-u_1)+ d  c_{12} n_1 n_2  (\Lambda_{12} - \Lambda_1)  +l_1 c_{12} n_1 n_2 ( \Theta_{12} - \Theta_1 ).
\end{align*}
 Next, we will insert the definitions of $u_{12}$ and $d \Lambda_{12}+l_1 \Theta_{12}$  given by \eqref{convexvel} and \eqref{contemp}. Analogously the energy exchange of species $2$ towards $1$ is 
$$
F_{E_{2,1}}=  c_{21} m_2 n_1 n_2 u_2 \cdot (u_{21}-u_2)+ d  c_{21} n_1 n_2  (\Lambda_{21} - \Lambda_2)  +l_2  c_{21} n_1 n_2 ( \Theta_{21} - \Theta_2 ).
$$
Substitute $u_{21}$ with \eqref{veloc}. 
This permits to rewrite the energy exchange as 
\begin{align}
\begin{split}
F_{E_{1,2}}&= \varepsilon c_{21} n_2 n_1 m_1 (1-\delta) u_1 \cdot (u_2-u_1) \\ &+  \varepsilon c_{21} n_1 n_2  \left[((1-\alpha)  \left( l_2 \Theta_2 +d \Lambda_2 - (d \Lambda_1 +l_1 \Theta_1)\right)  + \gamma   |u_1-u_2|^2\right],
\end{split}
 \label{flux_en_12}
\end{align}
\begin{align}
\begin{split}
F_{E_{2,1}} = &c_{21} m_2 n_1 n_2  (1-\delta) \frac{m_1}{m_2} \varepsilon u_2 \cdot (u_1-u_2)+ \frac{1}{2} c_{21} n_1 n_2 \big[ \varepsilon ( 1- \alpha) d( \Lambda_1 - \Lambda_2)\\&+ d  c_{21} n_1 n_2  (\Lambda_{21} - \Lambda_2)  +l_2  c_{21} n_1 n_2 ( \Theta_{21} - \Theta_2 ).
\end{split}
\label{flux_en_21}
\end{align}
Adding these two terms, we see that the total energy is conserved provided that $d\Lambda_{21} + l_2\Theta_{21}$ are given by \eqref{temp}.
\end{proof}
\begin{remark}
The energy flux between the two species is zero if and only if $u_1=u_2,$ $d \Lambda_1+l_1 \Theta_1=d\Lambda_2+ l_2 \Theta_2$ provided that $\alpha, \delta <1$ and $\gamma >0$.
\end{remark}
\begin{remark}
In order to ensure conservation of total energy, we do not need to assume an explicit expression on $\Lambda_{12}$ and $\Theta_{12}$, only on $d \Lambda_{12} +l_1 \Theta_{12}$. In addition,  we get only one condition on $d \Lambda_{21} + l_2 \Theta_{21}$ given by \eqref{temp}, but not an explicit formula for $\Lambda_{21}$ and $\Theta_{21}$. Later, for the H-Theorem, we will need explicit formulas for $\Lambda_{12}, \Theta_{12}, \Lambda_{21}, \Theta_{21}$, separately. We will assume  $ \Lambda_{12}=  \Theta_{12}$ and $ \Lambda_{21} =  \Theta_{21}$.
\label{detpar}
\end{remark}
\begin{remark}
\label{rem4}
The fact that we only consider the two species case is just for simplicity. We can also extend the model to more than two species, because we assume that we only have binary interactions. So if we consider collision terms given by \begin{align*} Q_i(&f_1,...,f_N):= \sum_{j=1}^N Q_{ij}(f_i,f_j) \\&= c_{ii} n_i \left( \frac{\Lambda_{ii}}{m_i} \nabla_{v} \cdot \left( M_i \nabla_{v} \left( \frac{f_i}{M_i} \right) \right) + \frac{\Theta_{ii}}{m_i} \nabla_{\eta_{l_i}} \cdot \left( M_i \nabla_{\eta_{l_i}} \left( \frac{f_i}{M_{i}} \right) \right)  \right)
\\&+ \sum_{j=1, j\neq i}^N \left( c_{ij} n_j \left( \frac{\Lambda_{ij}}{m_i} \nabla_{v} \cdot \left( M_{ij} \nabla_{v} \left( \frac{f_i}{M_{ij}} \right) \right)  + \frac{\Theta_{ij}}{m_i} \nabla_{\eta_{l_i}} \cdot \left( M_{ij} \nabla_{\eta_{l_i}} \left( \frac{f_i}{M_{ij}} \right) \right)\right)\right)
\end{align*}
for $i=1,...N$, we expect that we have conservation of total momentum and total energy in every interaction of species $i$ with species $j$. This means we require
$$ \int \begin{pmatrix}
v \\ v^2
\end{pmatrix} Q_{ij}(f_i,f_j) dv + \int \begin{pmatrix}
v \\ v^2
\end{pmatrix} Q_{ji}(f_j,f_i) dv =0,$$ for every $i,j=1,...N, ~ i \neq j$ and so it reduces to the two species case. 
\end{remark}

\section{Positivity of the internal energies}
\label{sec4}
\begin{theorem}
Assume that $f_1(x,v, \eta_{l_1},t), f_2(x,v,\eta_{l_2},t) > 0$. Then  all temperatures $\Lambda_1$, $\Lambda_2$, $\Theta_1$, $\Theta_2$, the internal energies $d \Lambda_{12} + l_1 \Theta_{12}$ given by \eqref{contemp},  and $d \Lambda_{21} + l_2 \Theta_{21}$ determined by \eqref{temp} are positive provided that 
 \begin{align}
0 \leq \gamma  \leq m_1 (1-\delta)
 \label{gamma}
 \end{align}
\end{theorem}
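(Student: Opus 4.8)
I would split the claim into three blocks: the ``physical'' temperatures $T_k^t,T_k^r,T_k$; the two mixture internal energies $d\Lambda_{12}+l_1\Theta_{12}$ and $d\Lambda_{21}+l_2\Theta_{21}$ (this is where \eqref{gamma} is actually used); and the four partial temperatures $\Lambda_1,\Lambda_2,\Theta_1,\Theta_2$. The first block is immediate: since $f_k>0$, the definitions \eqref{moments} give $n_k>0$ and $d n_k T_k^t=\int m_k|v-u_k|^2 f_k\,dv\,d\eta_{l_k}>0$, $l_k n_k T_k^r=\int m_k|\eta_{l_k}|^2 f_k\,dv\,d\eta_{l_k}>0$, hence $T_k^t,T_k^r>0$, and then \eqref{equ_temp} gives $T_k=\frac{dT_k^t+l_kT_k^r}{d+l_k}>0$. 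The identity I would carry through the rest of the proof is the one coming from \eqref{internal}, namely
$d\Lambda_k+l_k\Theta_k=dT_k^t+l_kT_k^r=(d+l_k)T_k>0$ for $k=1,2$: the \emph{total} internal$+$translational energy of species $k$ is positive irrespective of how it is split between $\Lambda_k$ and $\Theta_k$.

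\textbf{Mixture energies.} For the second block I would simply feed $d\Lambda_k+l_k\Theta_k=(d+l_k)T_k$ into \eqref{contemp} and into \eqref{temp}. The first becomes $d\Lambda_{12}+l_1\Theta_{12}=\alpha(d+l_1)T_1+(1-\alpha)(d+l_2)T_2+\gamma|u_1-u_2|^2$, which is positive since $\alpha\in[0,1]$ (in particular $\alpha<1$ as in the remark after Theorem~\ref{consenergy}), $T_1,T_2>0$, and $\gamma\ge0$ --- the lower half of \eqref{gamma}. The second becomes
\begin{align*}
d\Lambda_{21}+l_2\Theta_{21}=\varepsilon\bigl(m_1(1-\delta)-\gamma\bigr)|u_1-u_2|^2+\bigl(1-\varepsilon(1-\alpha)\bigr)(d+l_2)T_2+\varepsilon(1-\alpha)(d+l_1)T_1 ,
\end{align*}
where the last term is strictly positive ($\varepsilon>0$, $\alpha<1$, $T_1>0$), the middle term is nonnegative because $0<\varepsilon\le1$ and $0\le1-\alpha\le1$ force $\varepsilon(1-\alpha)\le1$, and the first term is nonnegative exactly because $\varepsilon>0$ together with the upper half of \eqref{gamma}, $\gamma\le m_1(1-\delta)$ (the interval in \eqref{gamma} being nonempty since $\delta<1$ and $m_1>0$). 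Hence $d\Lambda_{21}+l_2\Theta_{21}>0$, and by \eqref{equ_temp2} both mixture temperatures $T_{12},T_{21}$ are positive. I expect the main bookkeeping obstacle to be precisely this sign analysis of the rewritten \eqref{temp}: checking \emph{simultaneously} that the $|u_1-u_2|^2$-coefficient and the $T_2$-coefficient remain nonnegative is what pins down the two-sided bound \eqref{gamma} (and explains the role of $\varepsilon\le1$).

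\textbf{Partial temperatures.} For the last block I would use the relaxation equation \eqref{relax}. Eliminating $\Lambda_k$ via \eqref{internal}, i.e.\ $\Lambda_k=T_k^t+\frac{l_k}{d}(T_k^r-\Theta_k)$, turns \eqref{relax} (space--homogeneous, so $n_k$ constant) into a scalar relaxation ODE $\frac{d}{dt}\Theta_k=G_k(\Theta_k)$ with $G_k$ affine and strictly decreasing in $\Theta_k$. At $\Theta_k=0$ one has $G_k=\frac{c_{kk}n_k}{Z_k^r}\bigl(T_k^t+\frac{l_k}{d}T_k^r\bigr)+c_{kj}n_jT_{kj}>0$, using the first block and the positivity of $T_{kj}$ just established; at the threshold $\Theta_k=\frac{(d+l_k)T_k}{l_k}$ (where $\Lambda_k=0$) the relaxation-to-$\Lambda_k$ term of $G_k$ is strictly negative. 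A comparison/maximum-principle argument --- the same one used for the monoatomic mixture, see \cite{Pirner_Fokker} --- then keeps $\Theta_k$, hence also $\Lambda_k$, strictly between $0$ and $\frac{(d+l_k)T_k}{l_k}$ for all later times (for initial data with $\Lambda_k,\Theta_k>0$); controlling this moving upper barrier under the coupled heating of the two species is the only genuinely delicate point of this block. Finally, with the later choice $\Lambda_{12}=\Theta_{12}$, $\Lambda_{21}=\Theta_{21}$ of remark~\ref{detpar}, positivity of the combined mixture energies established above yields positivity of each of $\Lambda_{12},\Theta_{12},\Lambda_{21},\Theta_{21}$ individually. Everything outside the two flagged points is routine sign chasing.
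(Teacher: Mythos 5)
Your proposal is correct on the part of the statement where \eqref{gamma} actually does any work, and there it coincides with the paper's (much terser) argument: the paper likewise treats $d\Lambda_{12}+l_1\Theta_{12}$ as a convex combination of the positive quantities $d\Lambda_k+l_k\Theta_k=dT_k^t+l_kT_k^r$ plus $\gamma|u_1-u_2|^2$, and reads off \eqref{gamma} as exactly the condition making the $|u_1-u_2|^2$-coefficient in \eqref{temp} nonnegative; your explicit check that $1-\varepsilon(1-\alpha)\ge 0$ (using $0<\varepsilon\le 1$, $\alpha\in[0,1]$) is a coefficient the paper does not even mention. Where you genuinely diverge is the block $\Lambda_1,\Lambda_2,\Theta_1,\Theta_2$: the paper disposes of these in one clause (``positive by definition because they are integrals or convex combinations of positive functions''), whereas you correctly observe that $\Theta_k$ is defined through the relaxation equation \eqref{relax} and that $\Lambda_k=T_k^t+\tfrac{l_k}{d}(T_k^r-\Theta_k)$ is \emph{not} manifestly positive, and you substitute an invariant-region argument for the resulting ODE. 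Your version is the more honest one --- the lower barrier $\Theta_k>0$ does follow from $G_k(0)>0$ --- but be aware that your upper barrier (which is what positivity of $\Lambda_k$ requires) is not actually closed: at $\Theta_k=\tfrac{(d+l_k)T_k}{l_k}$ only the intra-species relaxation term of $G_k$ is negative, while the cross term $c_{kj}n_j(T_{kj}-\Theta_k)$ has no a priori sign there, so the comparison argument as stated needs an additional bound on $T_{kj}$ to go through. You flag this yourself as the delicate point; the paper simply does not acknowledge that there is anything to prove for this block.
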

\begin{proof}
The temperatures $\Lambda_1, \Lambda_2, \Theta_1, \Theta_2$ and the internal energy $d \Lambda_{12} + l_1\Theta_{12}$  are positive by definition because they are integrals or convex combinations of positive functions. So the only thing to check is when the internal energy $d \Lambda_{21} + l_1 \Theta_{21}$ in \eqref{temp} is positive. The resulting condition is given by \eqref{gamma}.
\end{proof}
\begin{remark}
Since $\gamma \geq 0$ is a non-negative number, so the right-hand side of the inequality in \eqref{gamma} must be non-negative. This condition is equivalent to 
\begin{align}
  \delta \leq 1.
\label{gammapos}
\end{align}
\end{remark}
\section{H-Theorem}
\label{sec6}
In this section we will prove that our model admits an entropy with an entropy inequality.  For this, we make the following additional assumptions. 
\begin{assumption}
\label{assump}
\hspace{1cm}
\begin{itemize}
\item  The friction constants $c_{11}, c_{22}, c_{12}, c_{21}$ are assumed to be independent of $t$.
\item  We assume that $\Lambda_{12}=\Theta_{12}$ and $\Lambda_{21}=\Theta_{21}$.  Then, we have separate expressions which determine $\Lambda_{12}, \Theta_{12}, \Lambda_{21}$ and $\Theta_{12}$ given by \eqref{internal}.
\item The initial data of $\Lambda_k$ and $\Theta_k$ are chosen such that $\Theta_k(0)-\Lambda_k(0)$ has the same sign as $T_k^r(0)- T_k^t(0)$. 
\item We make the following stronger assumptions on the free parameters
\begin{align*}
\gamma= \frac{\varepsilon}{1+ \varepsilon} m_1 (1-\delta), \quad  \delta\geq \max \{ \frac{1}{1+\varepsilon}, \frac{1+\varepsilon(1-\frac{m_1}{m_2})}{1+\varepsilon} \}, \quad \alpha \geq \varepsilon \frac{l_1+d}{2d +l_1+l_2}
\end{align*}
Note, that a typical choice in the literature for $\varepsilon$ is $\varepsilon=\frac{m_2}{m_1}$. Then the two terms in the maximum coincide. 
\item We assume the parameters $Z^r_1,Z^r_2$ determining the rate of $\Lambda$ and $\Theta$ to a common value satisfy $\frac{Z^r_2}{Z^r_1}=\frac{d+l_1}{d+l_2}$.
\end{itemize}
\end{assumption}
We start with the terms related to intra-species collisions.

\begin{lemma}[Contribution to the H-theorem from the one species relaxation terms]
Assume $f_1, f_2 >0$. 
Then
\begin{align*} 
I_k:&= \int \int  \frac{\Lambda_{kk}}{m_k} \nabla_{v} \cdot \left( M_k \nabla_{v} \left( \frac{f_k}{M_k} \right) \right) \ln f_k dv d\eta_{l_k} 
\\ &+ \int \int \frac{\Theta_{kk}}{m_k} \nabla_{\eta_{l_k}} \cdot \left( M_k \nabla_{\eta_{l_k}} \left( \frac{f_k}{M_{k}} \right) \right) \ln f_k dv d\eta_{l_k} \\&+ 
\int \int \frac{T_k}{m_k} \nabla_{v} \cdot \left( \widetilde{M}_k \nabla_v \left( \frac{M_k}{\widetilde{M}_k}  \right) \right)\ln M_k dv d\eta_{l_k} \\&+ \int \int \frac{T_k}{m_k} \nabla_{\eta_{l_k}} \cdot \left( \widetilde{M}_k \nabla_{\eta_{l_k}} \left( \frac{M_k}{\widetilde{M}_k} \right) \right) \ln M_k dv d\eta_{l_k} \leq 0, 
 \end{align*}
for  $k=1,2,$ with equality if and only if $f_k=M_k$ and  $\Lambda_k=\Theta_k=T_k^{r} = T_k^{t}$.
\label{one_species}
\end{lemma}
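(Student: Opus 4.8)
The plan is to split each of the four integrals defining $I_k$ into a dissipative part and a ``cross'' part and to bound them separately. Throughout, $\Lambda_{kk}=\Lambda_k$ and $\Theta_{kk}=\Theta_k$ denote the partial temperatures carried by $M_k$, and I shall use sufficient decay of $f_k$ in $v$ and $\eta_{l_k}$ so that all boundary terms produced by the integrations by parts vanish (the Gaussians $M_k,\widetilde M_k$ decay on their own and control the logarithmic growth of $\ln f_k$, $\ln M_k$). In the first two integrals one writes $\ln f_k=\ln(f_k/M_k)+\ln M_k$, and in the last two $\ln M_k=\ln(M_k/\widetilde M_k)+\ln\widetilde M_k$.

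For the dissipative parts, one integration by parts in $v$ and in $\eta_{l_k}$ turns the $\ln(f_k/M_k)$ contribution of the first two integrals into
\begin{align*}
&-\int\int \frac{\Lambda_k}{m_k}\,\frac{M_k^2}{f_k}\,\bigl|\nabla_v(f_k/M_k)\bigr|^2\,dv\,d\eta_{l_k}\\
&\qquad-\int\int \frac{\Theta_k}{m_k}\,\frac{M_k^2}{f_k}\,\bigl|\nabla_{\eta_{l_k}}(f_k/M_k)\bigr|^2\,dv\,d\eta_{l_k}\ \le\ 0,
\end{align*}
and the $\ln(M_k/\widetilde M_k)$ contribution of the last two into the analogous non-positive expression with $f_k/M_k$ replaced by $M_k/\widetilde M_k$, with $M_k$ replaced by $\widetilde M_k$, and with $\Lambda_k,\Theta_k$ replaced by $T_k$. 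Since $f_k>0$ and the Gaussians are strictly positive, each of these vanishes precisely when the quotient in question is constant in both $v$ and $\eta_{l_k}$; as $f_k$, $M_k$ and $\widetilde M_k$ all have mass $n_k$, this forces $f_k=M_k$ and $M_k=\widetilde M_k$, and comparing the exponents of the latter two Gaussians gives $\Lambda_k=\Theta_k=T_k$.

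For the cross parts I would integrate by parts twice, using $\nabla_v\ln M_k=-m_k(v-u_k)/\Lambda_k$, $\nabla_{\eta_{l_k}}\ln M_k=-m_k\eta_{l_k}/\Theta_k$ and the corresponding identities for $\widetilde M_k$, and then evaluate the resulting integrals with the moments \eqref{moments} of $f_k$ and of $M_k$. A short computation gives the four contributions
\begin{align*}
\frac{dn_k}{\Lambda_k}(T_k^t-\Lambda_k),\quad \frac{l_kn_k}{\Theta_k}(T_k^r-\Theta_k),\quad \frac{dn_k}{T_k}(\Lambda_k-T_k),\quad \frac{l_kn_k}{T_k}(\Theta_k-T_k).
\end{align*}
The last two cancel identically, since $d(\Lambda_k-T_k)+l_k(\Theta_k-T_k)=d\Lambda_k+l_k\Theta_k-(d+l_k)T_k=0$ by the definition \eqref{equ_temp} of $T_k$. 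For the first two I would insert the internal-energy constraint \eqref{internal}, written as $d(T_k^t-\Lambda_k)=l_k(\Theta_k-T_k^r)$, which collapses their sum to
\begin{align*}
\frac{dn_k}{\Lambda_k\Theta_k}\,(T_k^t-\Lambda_k)(\Theta_k-\Lambda_k)=\frac{l_kn_k}{\Lambda_k\Theta_k}\,(\Theta_k-T_k^r)(\Theta_k-\Lambda_k).
\end{align*}

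The only genuine obstacle, I expect, is to show that this residual term is non-positive, that is, that $\Lambda_k$ always lies between $T_k^t$ and $\Theta_k$ (equivalently, that $T_k^t,\Lambda_k,\Theta_k,T_k^r$ are monotonically ordered). This is exactly the step that uses Assumption \ref{assump}: the hypothesis that $\Theta_k-\Lambda_k$ and $T_k^r-T_k^t$ have the same sign, combined with \eqref{internal} and with the relaxation equation \eqref{relax} which is meant to propagate this sign in time, pins down the ordering and hence the sign of $(T_k^t-\Lambda_k)(\Theta_k-\Lambda_k)$. Adding the dissipative and cross parts then yields $I_k\le 0$. Finally, $I_k=0$ forces the dissipative part to vanish, hence $f_k=M_k$ and $\Lambda_k=\Theta_k$; matching the translational and internal second moments in \eqref{moments} of $f_k=M_k$ gives $T_k^t=\Lambda_k$ and $T_k^r=\Theta_k$, so $\Lambda_k=\Theta_k=T_k^t=T_k^r$; conversely, under these equalities one has $M_k=\widetilde M_k$, both quotients are identically $1$, and both brackets $T_k^t-\Lambda_k$, $T_k^r-\Theta_k$ vanish, so $I_k=0$.
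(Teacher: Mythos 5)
Your decomposition and the computation of the four cross terms are correct, but the proof has a genuine gap at the point you yourself flag as ``the only genuine obstacle,'' and the obstacle is not overcome by Assumption \ref{assump}. By discarding the dissipative parts of the third and fourth integrals you bound $I_{k,3}+I_{k,4}$ from above by $\frac{dn_k}{T_k}(\Lambda_k-T_k)+\frac{l_kn_k}{T_k}(\Theta_k-T_k)=0$, and are left with the residual
\begin{align*}
\frac{dn_k}{\Lambda_k\Theta_k}\,(T_k^t-\Lambda_k)(\Theta_k-\Lambda_k).
\end{align*}
This quantity is \emph{not} non-positive under the hypotheses of the lemma. The constraint \eqref{internal} only gives $d(T_k^t-\Lambda_k)=l_k(\Theta_k-T_k^r)$, and the assumption on initial data only forces $\Theta_k-\Lambda_k$ and $T_k^r-T_k^t$ to share a sign; neither pins $\Lambda_k$ between $T_k^t$ and $\Theta_k$. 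Concretely, take $d=3$, $l_k=2$, $T_k^t=1$, $T_k^r=2$, $\Lambda_k=\tfrac12$, $\Theta_k=\tfrac{11}{4}$: then \eqref{internal} holds, $\Theta_k-\Lambda_k>0$ and $T_k^r-T_k^t>0$ have the same sign, yet $(T_k^t-\Lambda_k)(\Theta_k-\Lambda_k)=\tfrac12\cdot\tfrac94>0$, so your upper bound for $I_k$ is strictly positive and proves nothing.

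The paper avoids this by \emph{not} throwing away the dissipative part of the $M_k$--$\widetilde M_k$ terms: since those two integrals involve only explicit Gaussians, it evaluates $I_{k,3}+I_{k,4}$ exactly, obtaining $dn_k\bigl(1-\tfrac{T_k}{\Lambda_k}\bigr)+l_kn_k\bigl(1-\tfrac{T_k}{\Theta_k}\bigr)$, which is $\le 0$ (and strictly negative when $\Lambda_k\neq\Theta_k$) rather than $0$. Adding this to the bounds on $I_{k,1},I_{k,2}$ collapses the total to
\begin{align*}
n_k\,\frac{d\,l_k}{d+l_k}\,(T_k^t-T_k^r)\,\frac{\Theta_k-\Lambda_k}{\Lambda_k\Theta_k},
\end{align*}
whose sign \emph{is} controlled by the assumption — but only after a further argument you also omit: one must show that $\Theta_k-\Lambda_k$ and $T_k^r-T_k^t$ keep their common sign for all $t>0$, which the paper does by deriving the relaxation ODEs for $\Theta_k-\Lambda_k$ and $T_k^r-T_k^t$ from \eqref{relax} and \eqref{BGK} and applying Duhamel's formula. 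In the exceptional case $\Lambda_k=\Theta_k$ your residual and the paper's coincide (both vanish), but away from that set the two bounds genuinely differ by the dissipative term you dropped, and that term is exactly what makes the estimate close. To repair your proof, compute $I_{k,3}+I_{k,4}$ exactly instead of bounding them, and then supply the sign-propagation argument for $(T_k^t-T_k^r)(\Theta_k-\Lambda_k)\le 0$.
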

\begin{proof}
The term $I_k$ can be written in the equivalent form
\begin{align*} 
I_k = \int \int  \left( \nabla_v   \cdot ((v-u_k)f_k) + \nabla_v \cdot \left( \frac{\Lambda_k}{m_k} \nabla_v f_k \right) \right)\ln f_k dv d\eta_{l_k}
\\ +  \int \int  \left( \nabla_{\eta_{l_k}}   \cdot (\eta_{l_k} f_k) + \nabla_{\eta_{l_k}} \cdot \left( \frac{\Theta_k}{m_k} \nabla_{\eta_{l_k}} f_k \right) \right)\ln f_k dv d\eta_{l_k}
\\+ \int \int  \left( \nabla_v   \cdot ((v-u_k) M_k) + \nabla_v \cdot \left( \frac{T_k}{m_k} \nabla_v M_k \right) \right)\ln M_k dv d\eta_{l_k}
\\ +  \int \int  \left( \nabla_{\eta_{l_k}}   \cdot (\eta_{l_k} M_k) + \nabla_{\eta_{l_k}} \cdot \left( \frac{T_k}{m_k} \nabla_{\eta_{l_k}} M_k \right) \right)\ln M_k dv d\eta_{l_k}, \end{align*}
see for example section 2 in \cite{Cory}.
After integration by parts, we obtain
\begin{align*} 
I_k &=-  \int \int  \sum_{i=1}^d \left( (v_i-u_{k,i}) \partial_{v_i} f_k + \frac{\Lambda_k}{m_k}  \frac{(\partial_{v_i} f_k)^2}{f_k}  \right) dv d\eta_{l_k}
\\ &-  \int \int  \sum_{i=1}^d \left( \eta_{l_{k,i}} \partial_{\eta_{l_{k,i}}} f_k + \frac{\Theta_k}{m_k}  \frac{(\partial_{\eta_{\eta_{l_k,i}}} f_k)^2}{f_k}  \right)  dv d\eta_{l_k}
\\&- \int \int  \sum_{i=1}^d \left( (v_i-u_{k,i}) \partial_{v_i} M_k + \frac{T_k}{m_k}  \frac{(\partial_{v_i} M_k)^2}{M_k}  \right)  dv d\eta_{l_k}
\\ &-  \int \int   \sum_{i=1}^d \left( \eta_{l_{k,i}} \partial_{\eta_{l_{k,i}}} M_k + \frac{\Theta_k}{m_k}  \frac{(\partial_{\eta_{\eta_{l_k,i}}} M_k)^2}{M_k}  \right)  dv d\eta_{l_k}
\\&=: I_{k,1}+I_{k,2}+ I_{k,3}+I_{k,4}
 \end{align*}
 We continue with writing $I_{k,1}$ as
 \begin{align*}
I_{k,1} &=  -  \int \int  \sum_{i=1}^d \left( (v_i-u_{k,i})  f_k + \frac{\Lambda_k}{m_k}  \partial_{v_i} f_k \right) \left(\frac{\partial_{v_i} f_k}{f_k} + \frac{v_i -u_{k,i}}{\Lambda_{k,i}/m_k} - \frac{v_i -u_{k,i}}{\Lambda_{k,i}/m_k} \right) dv d\eta_{l_k}
\\ &= -  \int \int  \sum_{i=1}^d \frac{\Lambda_k}{f_k} \left(\frac{ (v_i-u_{k,i}) }{\Lambda_k/m_k} f_k + \partial_{v_i} f_k  \right) ^2 dv d\eta_{l_k} \\&+  \int \int  \sum_{i=1}^d  \left(\frac{ (v_i-u_{k,i})^2 }{\Lambda_k/m_k} f_k + (v_i-u_{k,i}) \partial_{v_i} f_k  \right)  dv d\eta_{l_k} \\ &\leq 0 + d n_k \left( \frac{T_k^t}{\Lambda_k} -1 \right)
\end{align*}
In a similar way, one can show 
\begin{align*}
I_{k,2} \leq l_k n_k \left(\frac{T_k^r}{\Theta_k}-1 \right)
\end{align*}
It remains to estimate $I_{k,3}+I_{k,4}$. Since in this terms we have Maxwellians, we can directly compute the integrals and obtain
\begin{align*}
I_{k,3}+I_{k,4} = d n_k (1- \frac{T_k}{\Lambda_k} ) + l_k n_k (1- \frac{T_k}{\Theta_k})
\end{align*}
If we sum all the terms up, we obtain
\begin{align}
\begin{split}
I_k = d n_k (\frac{T_k^t}{\Lambda_k} - \frac{T_k}{\Lambda_k}) + l_k n_k ( \frac{T_k^r}{\Theta_k} - \frac{T_k}{\Theta_k}) = d n_k \frac{l_k}{d+l_k} \frac{T_k^t-T_k^r}{\Lambda_k} + l_k n_k \frac{d}{d+l_k} \frac{T_k^r-T_k^t}{\Theta_k} \\= n_k \frac{d~ l_k}{d+l_k} (T_k^t -T_k^r)( \frac{1}{\Lambda_k} - \frac{1}{\Theta_k}) = n_k \frac{d ~l_k}{d+l_k} (T_k^t -T_k^r)( \frac{\Theta_k-\Lambda_k}{\Lambda_k \Theta_k})
\end{split}
\label{I_k}
\end{align}
We conclude with proving that in the space-homogeneous case $T_k^t - T_k^r$ has the same sign as $\Theta_k-\Lambda_k$ if we require this initially. We take the equation for $\Theta_k$ given by \eqref{relax} and use that $n_k$ is constant in the space-homogeneous case (due to mass conservation) and  obtain \begin{align*}
\partial_t \Theta_k = \frac{c_{kk} n_k}{Z_k^r} n_k (T_k-\Theta_k) + c_{kj} n_j n_k (T_{kj} -\Theta_k)
\end{align*}
If we multiply \eqref{kin_Temp} by $|v-u_k|^2$ and integrate, we obtain
\begin{align*}
\partial_t \Lambda_k = \frac{c_{kk} n_k}{Z_k^r} n_k (T_k-\Lambda_k) + c_{kj} n_j n_k (T_{kj} -\Lambda_k)
\end{align*}
If we subtract, the first equation from the second one, we obtain
\begin{align*}
\partial_t (\Theta_k-\Lambda_k) = \frac{c_{kk} n_k}{Z^k_r} n_k (\Lambda_k -\Theta_k) +c_{kj} n_j n_k (\Lambda_k -\Theta_k)
\end{align*}
This leads to
\begin{align}
\Theta_k-\Lambda_k =e^{- (\frac{c_{kk} n_k}{Z^k_r} n_k + c_{kj} n_j n_k)t} (\Theta_k(0) -\Lambda_k(0))
\label{diffLT}
\end{align}
We observe that $\Theta_k-\Lambda_k$ remains the same sign as it is initially for all $t>0$.
Similar, from \eqref{BGK}, we can derive
\begin{align*}
\partial_t T_k^r = c_{kk} n_k n_k (\Theta_k -T_k^r) + c_{kj} n_j n_k (\Theta_{kj} - T_k^r) \\ \partial_t T_k^t = c_{kk} n_k n_k (\Lambda_k -T_k^t) + c_{kj} n_j n_k (\Lambda_{kj} - T_k^t)\end{align*}
With the second assumption of assumption \ref{assump}, we get
\begin{align*}
\partial_t(T_k^r - T_k^t) = c_{kk} n_k n_k (\Theta_k - \Lambda_k + T_k^t - T_k^r ) + c_{kj} n_j n_k (T_k^t- T_k^r)
\end{align*}
We insert \eqref{diffLT} and get
\begin{align*}
\partial_t(T_k^r - T_k^t) = c_{kk} n_k n_k (e^{-( \frac{c_{kk} n_k}{Z^k_r} n_k +c_{kj} n_k n_j)t} (\Theta_k(0) -\Lambda_k(0)) + T_k^t - T_k^r ) \\+ c_{kj} n_j n_k (T_k^t- T_k^r)
\end{align*}
With Duhamels formula, we obtain an equation of the form
\begin{align*}
T_k^r- T_k^t = e^{-(c_{kk} n_k + c_{kj} n_j) n_k t} (T_k^r(0) - T_k^t(0)) + \int_0^t c_{kk} n_k n_k C(t,s) (\Theta(0)- \Lambda(0)) ds, 
\end{align*}
with a positive constant $C(t,s).$ Here, we can see, that $T_k^r-T_k^t$ remains its sign if we choose the sign of $\Theta_k(0)- \Lambda_k(0)$ equal to the sign of  $T_k^r(0)-T_k^t(0)$. This leads to a non-positive sign of \eqref{I_k}.

\end{proof}
In the following we will consider the H-Theorem for the mixture interaction terms.
\textcolor{black}{Define 
$\frac{1}{z} := \frac{1}{Z_k^r} \frac{d+l_k}{d}, ~ k=1,2$ and the
 total entropy \begin{align}
 H(f_1,f_2) = \int (f_1 \ln f_1  +  z M_1 \ln M_1) dv d\eta_{l_1} + \int (f_2 \ln f_2+ z M_2 \ln M_2 ) dvd\eta_{l_2}.
 \label{H-funct}
 \end{align}} Note, that $z$ is independent of $k$ due to the last assumption in assumptions \eqref{assump}.

\begin{theorem}[H-theorem for mixture]
Assume $f_1, f_2 >0$. 
Assume the relationship between the collision frequencies \eqref{coll}, the conditions for the interspecies Maxwellians \eqref{density}, \eqref{convexvel}, \eqref{veloc}, \eqref{contemp} and \eqref{temp}, the positivity of all temperatures and assumptions \ref{assump}, then
\begin{align*}
I= \int \int  \frac{\Lambda_{12}}{m_1} \nabla_{v} \cdot \left( M_{12} \nabla_{v} \left( \frac{f_1}{M_{12}} \right) \right)\ln f_1 dv d\eta_{l_1} 
\\ + \int \int \frac{\Theta_{12}}{m_1} \nabla_{\eta_{l_1}} \cdot \left( M_{12} \nabla_{\eta_{l_1}} \left( \frac{f_1}{M_{12}} \right) \right) \ln f_1 dv d\eta_{l_1} \\+ 
 \int \int  \frac{\Lambda_{21}}{m_2} \nabla_{v} \cdot \left( M_{21} \nabla_{v} \left( \frac{f_2}{M_{21}} \right) \right) \ln f_2 dv d\eta_{l_1} 
\\ + \int \int \frac{\Theta_{21}}{m_2} \nabla_{\eta_{l_2}} \cdot \left( M_{21} \nabla_{\eta_{l_2}} \left( \frac{f_2}{M_{21}} \right) \right) \ln f_2 dv d\eta_{l_2}\\ +
 \int \int \frac{T_{12}}{m_1} \nabla_{v} \cdot \left( \widetilde{M}_{12} \nabla_v \left( \frac{M_{1}}{\widetilde{M}_{12}}  \right) \right)\ln M_1 dv d\eta_{l_1} \\+ \int \int \frac{T_{12}}{m_1} \nabla_{\eta_{l_1}} \cdot \left( \widetilde{M}_{12} \nabla_{\eta_{l_1}} \left( \frac{M_{1}}{\widetilde{M}_{12}} \right) \right) \ln M_1 dv d\eta_{l_1} \\
 +
 \int \int \frac{T_{21}}{m_2} \nabla_{v} \cdot \left( \widetilde{M}_{21} \nabla_v \left( \frac{M_{2}}{\widetilde{M}_{21}}  \right) \right)\ln M_2 dv d\eta_{l_2} \\+ \int \int \frac{T_{21}}{m_2} \nabla_{\eta_{l_2}} \cdot \left( \widetilde{M}_{21} \nabla_{\eta_{l_2}} \left( \frac{M_{2}}{\widetilde{M}_{21}} \right) \right) \ln M_2 dv d\eta_{l_2}\leq 0\end{align*}
with equality if and only $f_1$ and $f_2$ are Maxwell distributions with equal mean velocities $u=u_1=u_2$ and temperatures $T=T_1^{r}=T_2^{r}=T_1^{t}=T_2^{t}=\Lambda_1=\Lambda_2=\Theta_1=\Theta_2$.
\label{H-theorem}
\end{theorem}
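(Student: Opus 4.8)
The plan is to imitate, term by term, the computation in the proof of Lemma~\ref{one_species}: rewrite each of the eight integrals in $I$ using the divergence identities collected in the Remark, integrate by parts in $v$ and in $\eta_{l_k}$, and then split off from each a manifestly non-positive, Fisher-information-type quadratic term from a purely macroscopic remainder. For the four integrals tested against $\ln f_1$ or $\ln f_2$ I would complete the square exactly as for $I_{k,1},I_{k,2}$ in Lemma~\ref{one_species}, writing e.g.
\begin{align*}
& \int \int \frac{\Lambda_{12}}{m_1}\nabla_v\!\cdot\!\Bigl(M_{12}\nabla_v\bigl(\tfrac{f_1}{M_{12}}\bigr)\Bigr)\ln f_1\,dv\,d\eta_{l_1} \\
& \qquad = -\int \int \sum_{i=1}^d \frac{\Lambda_{12}}{m_1 f_1}\Bigl(\partial_{v_i}f_1+\tfrac{m_1(v_i-u_{12,i})}{\Lambda_{12}}f_1\Bigr)^2 dv\,d\eta_{l_1} + R_{12},
\end{align*}
where the macroscopic leftover $R_{12}$ is evaluated with the moment relations \eqref{moments} (using $\bar\eta_k=0$), and, after writing $v-u_{12}=(v-u_1)+(u_1-u_{12})$, equals an expression in $n_1,T_1^t,T_1^r,\Lambda_{12},\Theta_{12}$ and $|u_1-u_{12}|^2$; the analogous identity holds for the $\eta_{l_1}$-derivative term and for species $2$. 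The four integrals tested against $\ln M_1$ or $\ln M_2$ are exact Gaussian integrals, computed directly as for $I_{k,3}+I_{k,4}$, and give expressions in $n_k,\Lambda_k,\Theta_k,T_{kj}$ and $|u_k-u_{kj}|^2$. Collecting everything (with the positive weights $c_{12}n_2$, $c_{21}n_1$ on the two $f$-blocks and $z$ on the $M$-blocks that appear when one differentiates the functional $H$ in \eqref{H-funct}), $I$ becomes a sum of four non-positive squares plus a macroscopic remainder $R$.

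The heart of the proof is to show $R\le 0$. Here I would substitute, in this order: the simplifying choice $\Lambda_{12}=\Theta_{12}$, $\Lambda_{21}=\Theta_{21}$ together with \eqref{internal}, which turns $\Lambda_{12},\Theta_{12},\Lambda_{21},\Theta_{21}$ and $T_{12},T_{21}$ into single scalars determined by \eqref{contemp} and \eqref{temp}; then \eqref{convexvel}, \eqref{veloc} to express $u_1-u_{12}$ and $u_2-u_{21}$ as multiples of $u_1-u_2$; then $c_{12}=\varepsilon c_{21}$; and finally the relation $z=Z_k^r\frac{d}{d+l_k}$, common to $k=1,2$ by the last item of Assumption~\ref{assump}, so that the two species' $M$-contributions combine symmetrically. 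After these substitutions $R$ is, up to the positive factor $c_{21}n_1n_2$, a quadratic form in $|u_1-u_2|$ and in the temperature differences $T_k^t-T_k^r$ and in the inter-species temperature gap (conveniently tracked through $d\Lambda_1+l_1\Theta_1$ versus $d\Lambda_2+l_2\Theta_2$, equivalently through the common temperatures $T_1,T_2$ of \eqref{equ_temp}). I would then group its terms into perfect squares; the explicit parameter values of Assumption~\ref{assump} --- $\gamma=\frac{\varepsilon}{1+\varepsilon}m_1(1-\delta)$, the two lower bounds on $\delta$, and $\alpha\ge\varepsilon\frac{d+l_1}{2d+l_1+l_2}$ --- are precisely the inequalities making the coefficients of those squares non-negative, hence $R\le 0$. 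The sign-propagation facts established in the proof of Lemma~\ref{one_species} (that $\Theta_k-\Lambda_k$ and $T_k^r-T_k^t$ keep their initial sign, cf. \eqref{diffLT}) are used to control the $T_k^t-T_k^r$ contributions.

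For the equality case, vanishing of the four squares forces $f_1$ to be a Maxwellian proportional to $M_{12}$, $f_2$ one proportional to $M_{21}$, $M_1$ proportional to $\widetilde M_{12}$, and $M_2$ proportional to $\widetilde M_{21}$; matching densities, mean velocities and temperatures (recall $M_k$ shares the moments of $f_k$ and $n_{kj}=n_k$) yields $u_1=u_{12}$, $u_2=u_{21}$, $\Lambda_1=\Theta_1=T_{12}=\Lambda_{12}$ and $\Lambda_2=\Theta_2=T_{21}=\Lambda_{21}$. Feeding this into $R=0$, the strict versions of the inequalities in Assumption~\ref{assump} make the coefficient of $|u_1-u_2|^2$ strictly positive, so $u_1=u_2$ and therefore $u_{12}=u_{21}=u_1=u_2=:u$; the remaining scalar identities in $R=0$ together with \eqref{internal}, \eqref{contemp}, \eqref{temp} then collapse $\Lambda_1,\Lambda_2,\Theta_1,\Theta_2,T_1^t,T_1^r,T_2^t,T_2^r$ to a single value $T$, which --- combined with the conclusion of Lemma~\ref{one_species} for the intra-species relaxation --- gives exactly the stated characterization: $f_1,f_2$ Maxwellian with $u_1=u_2=u$ and $T_1^t=T_1^r=T_2^t=T_2^r=\Lambda_1=\Lambda_2=\Theta_1=\Theta_2=T$. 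Conversely, if $f_1=f_2$ is a single global Maxwellian with these common parameters, every mixture Maxwellian coincides with it and all operators vanish, so $I=0$.

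The hard part will be the second step: the bookkeeping that reduces $R$ to a single quadratic form and, above all, verifying that the particular constants in Assumption~\ref{assump} certify its non-positivity. This is where the interplay between the translational/rotational relaxation rates (the $z$ and $Z_k^r$ normalizations) and the momentum/energy exchange between species must be balanced, and where a less careful choice of the free parameters $\alpha,\gamma,\delta,Z_k^r$ would destroy the sign.
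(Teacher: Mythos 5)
Your proposal follows essentially the same route as the paper's proof: rewrite the eight integrals in divergence form, integrate by parts, split off non-positive Fisher-information squares, and reduce the macroscopic remainder --- via $\Lambda_{12}=\Theta_{12}$, $\Lambda_{21}=\Theta_{21}$, \eqref{internal}, \eqref{convexvel}, \eqref{veloc}, \eqref{contemp}, \eqref{temp} and the parameter constraints of Assumption \ref{assump} --- to a non-positive quadratic expression in $|u_1-u_2|^2$ and the gap $d\Lambda_1+l_1\Theta_1-(d\Lambda_2+l_2\Theta_2)$. The only superfluous element is your appeal to the sign-propagation of $T_k^r-T_k^t$ versus $\Theta_k-\Lambda_k$: that is needed only for the intra-species Lemma \ref{one_species}, whereas in the mixture terms the identity \eqref{internal} already converts $dT_k^t+l_kT_k^r$ into $d\Lambda_k+l_k\Theta_k$, so no sign-tracking is required.
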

\begin{remark}
The inequality in the H-Theorem is still true if $l_1=0$ or $l_2=0$ which means that one species is mono atomic. In this case only the equalities with $\Theta_1$ and $\Theta_2$, respectively in the local equilibrium vanish.
\end{remark}
\begin{proof}
The term $I$ can be written in the equivalent form
\begin{align*} 
I = \int \int  \left( \nabla_v   \cdot ((v-u_{12})f_1) + \nabla_v \cdot ( \frac{\Lambda_{12}}{m_1} \nabla_v f_1) \right)\ln f_1 dv d\eta_{l_1} \\+ \int \int  \left( \nabla_v   \cdot ((v-u_{21})f_2) + \nabla_v \cdot \left( \frac{\Lambda_{21}}{m_2} \nabla_v f_2 \right) \right)\ln f_2 dv d\eta_{l_2} 
\\ +  \int \int  \left( \nabla_{\eta_{l_1}}   \cdot (\eta_{l_1} f_1) + \nabla_{\eta_{l_1}} \cdot \left( \frac{\Theta_{12}}{m_1} \nabla_{\eta_{l_1}} f_1 \right) \right)\ln f_1 dv d\eta_{l_1}
\\ +  \int \int  \left( \nabla_{\eta_{l_2}}   \cdot (\eta_{l_2} f_2) + \nabla_{\eta_{l_2}} \cdot \left( \frac{\Theta_{21}}{m_2} \nabla_{\eta_{l_2}} f_2 \right) \right)\ln f_2 dv d\eta_{l_2}
\\  + z \int \int  \left( \nabla_v   \cdot ((v-u_{12}) M_1) + \nabla_v \cdot \left( \frac{T_{12}}{m_1} \nabla_v M_1 \right) \right)\ln M_1 dv d\eta_{l_1}
\\+ z \int \int  \left( \nabla_v   \cdot ((v-u_{21}) M_2) + \nabla_v \cdot \left( \frac{T_{21}}{m_2} \nabla_v M_2 \right) \right)\ln M_2 dv d\eta_{l_2}\\ +z  \int \int  \left( \nabla_{\eta_{l_1}}   \cdot (\eta_{l_1} M_1) + \nabla_{\eta_{l_1}} \cdot \left( \frac{T_{12}}{m_1} \nabla_{\eta_{l_1}} M_1 \right) \right)\ln M_1 dv d\eta_{l_1} 
\\ + z \int \int  \left( \nabla_{\eta_{l_2}}   \cdot (\eta_{l_2} M_2) + \nabla_{\eta_{l_2}} \cdot \left( \frac{T_{21}}{m_2} \nabla_{\eta_{l_2}}M_2 \right) \right)\ln M_2 dv d\eta_{l_2} 
\end{align*}

After integration by parts, we obtain
\begin{align*} 
I =-  \int \int  \sum_{i=1}^d \left( (v_i-u_{12,i}) \partial_{v_i} f_1 + \frac{\Lambda_{12}}{m_1}  \frac{(\partial_{v_i} f_1)^2}{f_1}  \right) dv d\eta_{l_1}
\\ -  \int \int  \sum_{i=1}^d \left( (v_i-u_{21,i}) \partial_{v_i} f_2 + \frac{\Lambda_{21}}{m_2}  \frac{(\partial_{v_i} f_2)^2}{f_2}  \right) dv d\eta_{l_2}
\\ -  \int \int  \sum_{i=1}^d \left( \eta_{l_{1,i}} \partial_{\eta_{l_{1,i}}} f_1 + \frac{\Theta_{12}}{m_1}  \frac{(\partial_{\eta_{\eta_{l_1,i}}} f_1)^2}{f_1}  \right)  dv d\eta_{l_1}
\\ -  \int \int  \sum_{i=1}^d \left( \eta_{l_{2,i}} \partial_{\eta_{l_{2,i}}} f_2 + \frac{\Theta_{21}}{m_2}  \frac{(\partial_{\eta_{\eta_{l_2,i}}} f_2)^2}{f_2}  \right)  dv d\eta_{l_2}
\\- z \int \int  \sum_{i=1}^d \left( (v_i-u_{12,i}) \partial_{v_i} M_1 + \frac{T_{12}}{m_1} \frac{ \left( \partial_{v_i} M_1 \right)^2}{M_1}  \right)  dv d\eta_{l_1}
\\- z \int \int  \sum_{i=1}^d \left( (v_i-u_{21,i}) \partial_{v_i} M_2 + \frac{T_{21}}{m_2}  \frac{\left(\partial_{v_i} M_2 \right)^2}{M_2}  \right)  dv d\eta_{l_1}\\ - z \int \int   \sum_{i=1}^d \left( \eta_{l_{1,i}} \partial_{\eta_{l_{1,i}}} M_1 + \frac{T_{12}}{m_1}  \frac{\left(\partial_{\eta_{\eta_{l_1,i}}} M_1 \right)^2}{M_1}  \right)  dv d\eta_{l_1}
\\ -  z \int \int   \sum_{i=1}^d \left( \eta_{l_{2,i}}  \partial_{\eta_{l_{2,i}}} M_2 + \frac{T_{21}}{m_2}  \frac{ \left(\partial_{\eta_{\eta_{l_2,i}}} M_2 \right)^2}{M_2}  \right)  dv d\eta_{l_2}
\\=: I_{1}+I_{2}+ I_{3}+I_{4} +I_5 +I_6 +I_7 +I_8
 \end{align*}
We start with $I_1+I_2+I_3+I_4$. We handle this sum similar as  in the one species case. Note that, for instance in $I_1$, now we add and subtract the term $\frac{v_i - u_{12,i}}{\Lambda_{12}}$ instead of $\frac{v_i-u_{1i}}{\Lambda_1}$. Therefore, we have to compute integrals over $|v-u_{12} |^2 f_1$ instead of $|v-u_1|^2 f_1$, which leads to the additional appearance of velocity terms. We obtain
\begin{align*}
I_1 + I_2 +I_3 +I_4 \leq \varepsilon c_{21} d n_1 n_2 (\frac{T_1^t + m_1 |u_1-u_{12}|^2}{\Lambda_{12}} -1) + \varepsilon c_{21} l_1 n_1 n_2 (\frac{T_1^r }{\Theta_{12}} -1) \\ +  c_{21} d n_1 n_2 (\frac{T_2^t + m_2 |u_2-u_{21}|^2}{\Lambda_{21}} -1) + c_{21} l_2 n_1 n_2 (\frac{T_2^r }{\Theta_{21}} -1) \\ = \varepsilon c_{21} d n_1 n_2 (\frac{T_1^t + m_1 (1-\delta)^2 |u_1-u_{2}|^2}{\Lambda_{12}} -1) + \varepsilon c_{21} l_1 n_1 n_2 (\frac{T_1^r }{\Theta_{12}} -1) \\ +  c_{21} d n_1 n_2 (\frac{T_2^t + m_2 (\frac{m_1}{m_2})^2 \varepsilon^2 (1-\delta)^2 |u_2-u_{1}|^2}{\Lambda_{21}} -1) + c_{21} l_2 n_1 n_2 (\frac{T_2^r }{\Theta_{21}} -1)
\\ = \varepsilon c_{21} d n_1 n_2 (\frac{T_1^t + m_1 (1-\delta)^2 |u_1-u_{2}|^2- \Lambda_{12}}{\Lambda_{12}} ) + \varepsilon c_{21} l_1 n_1 n_2 (\frac{T_1^r -\Theta_{12}}{\Theta_{12}} ) \\ +  c_{21} d n_1 n_2 (\frac{T_2^t + m_2 (\frac{m_1}{m_2})^2 \varepsilon^2 (1-\delta)^2 |u_2-u_{1}|^2- \Lambda_{21}}{\Lambda_{21}} ) + c_{21} l_2 n_1 n_2 (\frac{T_2^r- \Theta_{21} }{\Theta_{21}} )
\end{align*}
In the second equality, we used the expressions of $u_{12},u_{21}$ given by \eqref{veloc}.
Simliar, the other four terms can be estimated by
\begin{align*}
I_5+I_6&+I_7+I_8 \\ &\leq \varepsilon z c_{21} d n_1 n_2 (\frac{\Lambda_1 + m_1 (1-\delta)^2 |u_1-u_{2}|^2- T_{12}}{T_{12}} ) + \varepsilon z c_{21} l_1 n_1 n_2 (\frac{\Theta_1 -T_{12}}{T_{12}} ) \\ &+  c_{21} d z n_1 n_2 (\frac{\Lambda_2 + m_2 (\frac{m_1}{m_2})^2 \varepsilon^2 (1-\delta)^2 |u_2-u_{1}|^2- T_{21}}{T_{21}} ) + c_{21} z l_2 n_1 n_2 (\frac{\Theta_2- T_{21} }{T_{21}} )
\end{align*}
We start with $I_1+I_2+I_3+I_4$. We use the conservation of internal energy \eqref{internal} , the second assumption in assumptions \ref{assump} together with the definition of $T_{12}, T_{21}$ $(\Rightarrow T_{12}=\Lambda_{12}=\Theta_{12}; T_{21}=\Lambda_{21}=\Theta_{21})$ to get
\begin{align*}
I_1+I_2+I_3+I_4   \leq \varepsilon c_{21}  n_1 n_2 (\frac{d \Lambda_1 + m_1 (1-\delta)^2 |u_1-u_{2}|^2 +l_1 \Theta_1- (d \Lambda_{12} +l_1 \Theta_{12})}{T_{12}} )  \\ +  c_{21}  n_1 n_2 (\frac{d \Lambda_2 + m_2 (\frac{m_1}{m_2})^2 \varepsilon^2 (1-\delta)^2 |u_2-u_{1}|^2 +l_2 \Theta_2- (d \Lambda_{21}+l_2 \Theta_{21})}{T_{21}} ) 
\end{align*}
We insert the expressions of $d \Lambda_{12} +l_1 \Theta_{12},$ $d\Lambda_{21} +l_2 \Theta_{21}$ given by \eqref{contemp}, \eqref{temp} and obtain
\begin{align*}
I_1+I_2+I_3+I_4 \leq  c_{21} n_1 n_2 (\varepsilon \frac{(1-\alpha)(d \Lambda_1 +l_1 \Theta_1 -(d \Lambda_2 +l_2 \Theta_2)) + \tilde{\gamma}_1 |u_1-u_2|^2}{T_{12}}\\ +  \frac{\varepsilon(1-\alpha)(d \Lambda_2 +l_2 \Theta_2 -(d \Lambda_1 +l_1 \Theta_1)) + \tilde{\gamma}_2 |u_1-u_2|^2}{T_{21}}\end{align*}
with $\tilde{\gamma}_1= m_1 (1-\delta)^2-\gamma$ and $\tilde{\gamma}_2= m_2 (1-\delta)^2 \varepsilon^2 (m_1/m_2)^2-\varepsilon m_1(1-\delta) + \varepsilon \gamma$. Under the assumptions \ref{assump} on $\gamma$ and $\delta$, we can estimate $\tilde{\gamma}_1$ and $\tilde{\gamma}_2$ by zero from above
\begin{align}
\begin{split}
I_1+I_2+I_3+I_4\leq c_{21} n_1 n_2 \varepsilon (1-\alpha)(d \Lambda_1 +l_1\Theta_2)-(d \Lambda_2 +l_2\Theta_2))(\frac{1}{T_{12}} -\frac{1}{T_{21}}) \\= c_{21} n_1 n_2 \varepsilon (1-\alpha)(d \Lambda_1 +l_1\Theta_1)-(d \Lambda_2 +l_2 \Theta_2))\frac{T_{21}-T_{12}}{T_{12}T_{21}} 
\label{I1-I4}
\end{split}
\end{align}
We can compute \begin{align}
\begin{split}
T_{21}-T_{12}&= (1-\varepsilon(1-\alpha)) T_2 + \varepsilon (1-\alpha) \frac{d+l_1}{d+l_2} T_1 - \alpha T_1 -(1-\alpha) \frac{d+l_2}{d-l_1} T_2 \\ &= (\varepsilon(1-\alpha) \frac{d+l_1}{d+l_2} -\alpha) T_1 +(1 -\varepsilon (1-\alpha)\frac{d+l_2}{d+l_1}) T_2 \\&= (\alpha -\varepsilon(1-\alpha) \frac{d+l_1}{d+l_2}) ( \frac{d+l_2}{d+l_1} T_2 -T_1) \\ &=  (\alpha -\varepsilon(1-\alpha) \frac{d+l_1}{d+l_2}) \frac{1}{d+l_1}(d \Lambda_2 +l_2 \Theta_2 -(d\Lambda_1 +l_1 \Theta_1))
\label{T12-T21}
\end{split}
\end{align}
Note that due to the assumption on $\gamma$ in assumptions \ref{assump} the velocity terms do not show up. 
Under the assumptions \ref{assump} on $\alpha$ this leads to a positive sign of $(\alpha -\varepsilon(1-\alpha) \frac{d+l_1}{d+l_2})$. Therefore if we insert \eqref{T12-T21} into \eqref{I1-I4}, we obtain $I_1+I_2+I_3+I_4 \leq 0$.
In the same way, we can estimate the terms $I_5+I_6+I_7+I_8$. If we check all inequalities in this proof, we observe that we have equality  if and only $f_1$ and $f_2$ are Maxwell distributions with equal mean velocities $u=u_1=u_2$ and temperatures $T=T_1^{r}=T_2^{r}=T_1^{t}=T_2^{t}=\Lambda_1=\Lambda_2=\Theta_1=\Theta_2$.\end{proof}
 Using the definition \eqref{H-funct}, we can compute 
\begin{align*} \partial_t H(f_1,f_2) = I_1+I_2 +I
\end{align*}
 by multiplying the Fokker-Planck equation for species $1$ by $\ln f_1$, the Fokker-Planck equation for the species $2$ by $\ln f_2$, equations \eqref{kin_Temp} by $ z \ln M_k$ and sum the integrals with respect to $v$ and $\eta_{l_1}$ and $\eta_{l_2}$, respectively. This leads to the following corollary. 
 \begin{cor}[Entropy inequality for mixtures]
Under the same assumption as in theorem \ref{H-theorem} we have the following entropy inequality
\begin{align*}
\partial_t \left(  H(f_1,f_2) \right)  \leq 0,
\end{align*}
with equality if and only $f_1$ and $f_2$ are Maxwell distributions with equal mean velocities $u=u_1=u_2$ and temperatures $T=T_1^{r}=T_2^{r}=T_1^{t}=T_2^{t}=\Lambda_1=\Lambda_2=\Theta_1=\Theta_2$.
\end{cor}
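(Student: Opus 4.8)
The plan is to reduce the corollary directly to Lemma~\ref{one_species} and Theorem~\ref{H-theorem}, via the elementary time differentiation of the entropy functional \eqref{H-funct} already announced just above the statement. Working in the space-homogeneous regime, I would first differentiate $H$ under the integral sign. Each summand of \eqref{H-funct} has the form $\int g\ln g\,d\mu$, so $\partial_t\!\int g\ln g\,d\mu=\int(\partial_t g)(1+\ln g)\,d\mu$. Substituting $g=f_k$ with $\partial_t f_k$ given by \eqref{BGK}, and $g=M_k$ with $\partial_t M_k$ given by \eqref{kin_Temp}, the contribution of the ``$+1$'' terms is $\partial_t n_k$ for the $f_k$-part and $z\,\partial_t n_k$ for the $M_k$-part (the collision operators on the right-hand sides of \eqref{BGK} and \eqref{kin_Temp} are all divergences in $v$ and $\eta_{l_k}$, hence integrate to zero), and $\partial_t n_k=0$ by mass conservation. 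Hence
\begin{align*}
\partial_t H(f_1,f_2)=\sum_{k=1,2}\left(\int(\partial_t f_k)\ln f_k\,dv\,d\eta_{l_k}+z\int(\partial_t M_k)\ln M_k\,dv\,d\eta_{l_k}\right).
\end{align*}

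Next I would split each of these integrals into its intra-species ($c_{kk}$) and inter-species ($c_{kj}$, $j\neq k$) parts. The key algebraic observation is that, by the last item of Assumptions~\ref{assump}, $z$ is independent of $k$ and satisfies $z\cdot\frac{1}{Z_k^r}\frac{d+l_k}{d}=1$, so that multiplying the intra-species term of \eqref{kin_Temp} by $z\ln M_k$ exactly cancels its prefactor $\frac{c_{kk}n_k}{Z_k^r}\frac{d+l_k}{d}$ down to $c_{kk}n_k$. Together with the intra-species part of \eqref{BGK} multiplied by $\ln f_k$, this reassembles, after the integrations by parts already performed in Lemma~\ref{one_species}, into $c_{11}n_1 I_1+c_{22}n_2 I_2$, where $I_k$ is exactly the quantity of Lemma~\ref{one_species}. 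The inter-species parts, using the relation \eqref{coll} between $c_{12}$ and $c_{21}$, reassemble (up to a positive multiplicative constant) into the quantity $I$ of Theorem~\ref{H-theorem} --- indeed the statement of that theorem is organised precisely so that this matching holds. Thus $\partial_t H = c_{11}n_1 I_1 + c_{22}n_2 I_2 + (\text{positive const.})\cdot I$.

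The conclusion is then immediate: Lemma~\ref{one_species} gives $I_1\le 0$ and $I_2\le 0$, Theorem~\ref{H-theorem} gives $I\le 0$, and all friction constants and number densities are positive by hypothesis, so $\partial_t H\le 0$. For the equality case, $\partial_t H=0$ forces $I_1=I_2=0$ and $I=0$ simultaneously; Lemma~\ref{one_species} then yields $f_k=M_k$ and $\Lambda_k=\Theta_k=T_k^t=T_k^r$ for $k=1,2$, while Theorem~\ref{H-theorem} yields that $f_1,f_2$ are Maxwellians with $u_1=u_2$ and all temperatures equal; intersecting these gives exactly the asserted characterisation $u=u_1=u_2$ and $T=T_1^{r}=T_2^{r}=T_1^{t}=T_2^{t}=\Lambda_1=\Lambda_2=\Theta_1=\Theta_2$.

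I expect the only genuinely delicate point to be the bookkeeping in the second paragraph: checking that testing the coupled system against $\ln f_k$ and $z\ln M_k$ reproduces the terms of Lemma~\ref{one_species} and Theorem~\ref{H-theorem} verbatim, in particular that the factor $z$ and the coefficient $\frac{1}{Z_k^r}\frac{d+l_k}{d}$ combine to $1$ --- which is precisely the role of the assumption $Z_2^r/Z_1^r=(d+l_1)/(d+l_2)$. Everything else is a direct appeal to results already established.
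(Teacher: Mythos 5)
Your proposal is correct and follows essentially the same route as the paper: the paper likewise obtains $\partial_t H(f_1,f_2)=I_1+I_2+I$ by testing the two Fokker--Planck equations against $\ln f_1$, $\ln f_2$ and equations \eqref{kin_Temp} against $z\ln M_k$, and then invokes Lemma~\ref{one_species} and Theorem~\ref{H-theorem} for the sign and the equality case. Your additional bookkeeping (the vanishing of the ``$+1$'' contributions by mass conservation and the cancellation of $z$ against $\frac{1}{Z_k^r}\frac{d+l_k}{d}$) only makes explicit what the paper leaves implicit.
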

\section{A mixture consisting of a mono and a polyatomic gas}
\label{sec9}
We consider now the special case of two species, one species is mono-atomic and has only translational degrees of freedom $l_1=0$, the other one has $l_2$ rotational degrees of freedom. Both have the number of degrees of freedom in translations given by $d$ with $d\in \mathbb{N}$. In this case the total number of rotational degrees of freedom is $M=l_1+l_2=l_2$. Our variables for the rotational energy degrees of freedom are $\eta \in \mathbb{R}^2$, $ \eta_{l_1} = \begin{pmatrix}
0 \\ 0 
\end{pmatrix},$ $ \eta_{l_2}= \eta $, since $\eta_{l_k}$ coincides with $\eta$ in the components corresponding to the rotational degrees of freedom of species $k$ and is zero in the other components. 
So our distribution function $f_1(x,v,t)$ of species $1$ depends on $x,v,$ and $t$ and our distribution function $f_2(x,v,\eta,t)$ of species $2$ depends on $x,v, \eta$ and $t$. The moments of $f_1$ are given by 
\begin{align}
\int f_1(v) \begin{pmatrix}
1 \\ v  \\ m_1 |v-u_1|^2  
\end{pmatrix} 
dv =: \begin{pmatrix}
n_1 \\ n_1 u_1  \\ d n_1 T_1^{t} 
\end{pmatrix} 
\end{align} 
and the moments of species $2$ are given by 
\begin{align}
\int f_2(v, \eta) \begin{pmatrix}
1 \\ v \\ \eta \\ m_2 |v-u_2|^2 \\ m_2 |\eta  |^2 \end{pmatrix} 
dvd\eta=: \begin{pmatrix}
n_2 \\ n_2 u_2 \\ 0 \\ d n_2 T_2^{t} \\ l_2 n_2 T_k^{r} 
\end{pmatrix} .
\end{align} 
The third equality is an assumption. We could also consider a general $\bar{\eta}$.
Our model reduces to 
\begin{align} \begin{split} 
\partial_t f_1   &= c_{11} n_1 \left( \frac{\Lambda_{11}}{m_1} \nabla_{v} \cdot \left( M_1 \nabla_{v} \left( \frac{f_1}{M_1} \right) \right) 
 \right)
+ c_{12} n_2 \left( \frac{\Lambda_{12}}{m_1} \nabla_{v} \cdot \left( M_{12} \nabla_{v} \left( \frac{f_1}{M_{12}} \right) \right)  
\right)\\ 
\partial_t f_2  &=c_{22} n_2 \left( \frac{\Lambda_{22}}{m_2} \nabla_{v} \cdot \left( M_2 \nabla_{v} \left( \frac{f_2}{M_2} \right) \right) + \frac{\Theta_{22}}{m_2} \nabla_{\eta} \cdot \left( M_2 \nabla_{\eta} \left( \frac{f_2}{M_{2}} \right) \right)  \right)
\\&+ c_{21} n_1 \left( \frac{\Lambda_{21}}{m_2} \nabla_{v} \cdot \left( M_{21} \nabla_{v} \left( \frac{f_2}{M_{21}} \right) \right)  + \frac{\Theta_{21}}{m_2} \nabla_{\eta} \cdot \left( M_{21} \nabla_{\eta} \left( \frac{f_2}{M_{21}} \right) \right)\right)
\end{split}
\end{align}
with the modified Maxwellians
{\footnotesize
\begin{align} 
\begin{split}
M_1(f_1)(x,v,t) &= \frac{n_1}{\sqrt{2 \pi \frac{T_1^t}{m_1}}^d }   \exp\left(- \frac{1}{2} \frac{m_1 |v-u_1|^2}{T_1^t}\right), 
\\
M_2(f_2)(x,v,\eta,t) &= \frac{n_2}{\sqrt{2 \pi \frac{\Lambda_2}{m_2}} } \frac{1}{\sqrt{2 \pi \frac{\Theta_2}{m_2}}^{l_2}}  \exp\left(- \frac{1}{2} \frac{m_2 |v-u_2|^2}{\Lambda_2}- \frac{1}{2} \frac{m_2 |\eta|^2}{\Theta_2}\right), 
\\
M_{12}(x,v,t) &= \frac{n_{12}}{\sqrt{2 \pi \frac{\Lambda_{12}}{m_1}}^d }  \exp \left({- \frac{|v-u_{12}|^2}{2 \frac{\Lambda_{12}}{m_1}}}\right),
\\
M_{21}(x,v,\eta,t) &= \frac{n_{21}}{\sqrt{2 \pi \frac{\Lambda_{21}}{m_2}}^d } \frac{1}{\sqrt{2 \pi \frac{\Theta_{21}}{m_2}}^{l_2}} \exp \left({- \frac{|v-u_{21}|^2}{2 \frac{\Lambda_{21}}{m_2}}}- \frac{|\eta|^2}{2 \frac{\Theta_{21}}{m_2}}\right),
\end{split}
\end{align}}
 For $\Lambda_2$ we use the additional relaxation equation 
 \begin{align}
\partial_t M_2  = \frac{c_{22} n_2}{Z_2^r} \frac{d+l_2}{d} (\widetilde{M}_2 - M_2) + c_{21} n_1 (\widetilde{M}_{21} - M_2), 
\label{kin_TempESapp}
\end{align}
Here, $\widetilde{M}_2$ is given by
\begin{align}
\widetilde{M}_2 = \frac{n_2}{\sqrt{2\pi \frac{T_2}{m_2}}} \frac{1}{\sqrt{2 \pi \frac{T_2}{m_2}}^{l_2}} \exp \left( - \frac{1}{2}\frac{m_2 |v-u_2|^2}{T_2} - \frac{1}{2}  \frac{m_2 |\eta|^2 }{ T_2} \right),
\end{align}
where  $T_2$ is defined as
\begin{align}
\begin{split}
T_2 := \frac{d}{d+2} \Lambda_2 + \frac{2}{d+2} \Theta_2 
\end{split}
\label{Tenapp}
\end{align}
 We couple this with conservation of internal energy of species $2$
 \begin{align}
\frac{d}{2} n_2 \Lambda_2 = \frac{d}{2} n_2 T_2^{t} +\frac{l_2}{2} n_2 T_2^{r} - \frac{l_2}{2} n_2 \Theta_2. 
\end{align} 
If we multiply \eqref{kin_TempESapp} by $|\eta|^2$ and integrate with respect to $v$ and $\eta$, this leads to the following macroscopic equation 
\begin{align}
\begin{split}
\partial_t ( \Lambda_2)  = \frac{c_{22} n_2}{Z_r^2} \frac{d+2}{d} ( T_2 - \Lambda_2) + c_{21} n_1 (T_{21} - \Theta_2).
\end{split}
\end{align}
If we assume that \begin{align*} n_{12}=n_1 \quad \text{and} \quad n_{21}=n_2, \\ u_{12}= \delta u_1 + (1- \delta) u_2, \quad \delta \in \mathbb{R},
\end{align*}
and
\begin{align}
\begin{split}
d \Lambda_{12} &=  \alpha T_1^{t} + ( 1 - \alpha) (d \Lambda_2 +l_2 \Theta_2) + \gamma |u_1 - u_2 | ^2,  \quad 0 \leq \alpha \leq 1, \gamma \geq 0, 
\label{contemp3}
\end{split}
\end{align}

we have conservation of mass, total momentum and total energy provided that
\begin{align}
u_{21}=u_2 - \frac{m_1}{m_2} \varepsilon (1- \delta ) (u_2 - u_1),
\end{align}
\begin{align}
\begin{split}
d \Lambda_{21} + l_2 \Theta_{21}=\left[  \varepsilon m_1 (1- \delta)  - \varepsilon \gamma \right] |u_1 - u_2|^2 \\+ \varepsilon ( 1 - \alpha ) d T_1^{t} + ( 1- \varepsilon ( 1 - \alpha)) (d  \Lambda_2 + l_2  \Theta_2).
\end{split}
\end{align}
If we make the assumptions \eqref{assump} the H-Theorem applies with $l_1=0$, $\Lambda_1=T_1^t$ . Note that the  assumption $\Lambda_{12}=\Theta_{12}$ is not necessary, since there is no $\Theta_{12}$ and \eqref{contemp3} already provides an explicit formula for $\Lambda_{12}$. 


%
%
%

{\bf Acknowledgements}

Marlies Pirner was funded by the Deutsche Forschungsgemeinschaft (DFG, German Research Foundation) under Germany’s Excellence Strategy EXC 2044-390685587, Mathematics Münster: Dynamics–Geometry–Structure,  and the German Science Foundation DFG (grant no. PI 1501/2-1).

%
%

\noindent

\bigskip
\bibliographystyle{abbrv}
\bibliography{sn-bibliography}

\end{document}